\def\version{December 8, 2020}
\definecolor{MyDarkBlue}{rgb}{0,0.08,0.45}
\definecolor{MyDarkGreen}{rgb}{0,0.75,0}
\definecolor{Pomegranade}{rgb}{0.6,0.1,0.15}
\definecolor{purple}{rgb}{0.6,0.1,0.15}
\newcommand{\Span}{\mathop{\mathrm{Span}}}
\newcommand{\beqn}{\begin{eqnarray}}
\newcommand{\eeqn}{\end{eqnarray}}
\newcommand{\be}{\begin{equation}}
\newcommand{\ee}{\end{equation}}
\newcommand{\ba}{\begin{array}}
\newcommand{\ea}{\end{array}}
\newcommand{\p}{\partial}
\renewcommand{\Im}{\,\mathrm{Im}\,}
\renewcommand{\Re}{\,\mathrm{Re}\,}
\newcommand{\fra}[2]{{#1}/{#2}}
\newcommand{\cH}{{\cal H}}
\newcommand{\calC}{\mathcal{C}}
\newcommand{\cS}{{\cal S}}
\newcommand{\calE}{\mathcal{E}}
\newcommand{\calQ}{\mathcal{Q}}
\newcommand{\calT}{\mathcal{T}}
\newcommand{\calX}{\mathcal{X}}
\newcommand\C{{\mathbb C}}
\newcommand\R{{\mathbb R}}
\newcommand{\bfA}{\mathbf{A}}
\newcommand{\bfG}{\mathbf{G}}
\newcommand{\bfJ}{\mathbf{J}}
\newcommand{\bfH}{\mathbf{H}}
\newcommand{\bmSigma}{\bm\Sigma}
\newcommand{\dom}{\mathfrak{D}}
\newcommand{\jj}{\mathrm{i}}
\newcommand{\e}{\bm{e}}
\newcommand{\w}{\bm{w}}
\newtheorem{theorem}{Theorem}[section]
\newtheorem{lemma}[theorem]{Lemma}
\newtheorem{corollary}[theorem]{Corollary}
\theoremstyle{definition}
\newtheorem{definition}[theorem]{Definition}
\newtheorem{remark}[theorem]{Remark}
\makeatletter\@addtoreset{equation}{section}
\providecommand{\itref}[1]{{\it (\ref{#1})}}
\newcommand{\bd}{\begin{definition}}
\newcommand{\ed}{\end{definition}}
\newcommand{\bt}{\begin{theorem}}
\newcommand{\et}{\end{theorem}}
\newcommand{\bp}{\begin{pro}}
\newcommand{\ep}{\end{pro}}
\newcommand{\bl}{\begin{lemma}}
\newcommand{\el}{\end{lemma}}
\newcommand{\bc}{\begin{corollary}}
\newcommand{\ec}{\end{corollary}}
\newcommand{\br}{\begin{remark} }
\newcommand{\er}{\end{remark}}
\newcommand{\brs}{\begin{remarks}}
\newcommand{\ers}{\end{remarks}}
\newcommand{\abs}[1]{\vert #1\vert}
\newcommand{\norm}[1]{\Vert #1\Vert}
\begin{document}


\title{
On spectral and orbital stability 
for the Klein--Gordon equation coupled
to an anharmonic oscillator}

\author{
{\sc Andrew Comech}
\\
{\small\it Texas A\&M University, College Station, TX}
\\
{\small\it Institute for Information Transmission Problems, Moscow, Russia}
\\ \\
{\sc Elena A. Kopylova}
\\
{\small\it Institute for Information Transmission Problems, Moscow, Russia}
\\
{\small\it Vienna University, Vienna, Austria}
}

\date{\version}

\maketitle

\begin{abstract}
We obtain explicit characterization of spectral and orbital stability
of solitary wave solutions to the
$\mathbf{U}(1)$-invariant
Klein--Gordon equation
in one spatial dimension
coupled to an anharmonic oscillator.
We also give the complete analysis of the spectrum
of the linearization at a solitary wave.
\end{abstract}

\section{Introduction}
\label{intr}

In the present article we study the spectral stability
of the $\mathbf{U}(1)$-invariant
Klein--Gordon equation on a line with a concentrated nonlinearity:
\begin{equation}\label{KG}
\ddot\psi(x,t)=
\p_x^2\psi(x,t)-m^2\psi(x,t)+\delta(x)a(\abs{\psi(0,t)}^2)\psi(0,t),\qquad
\psi(x,t)\in\C,
\quad
x\in\R,
\end{equation}
where $m>0$.
Above, $a(\cdot)$ is a real-valued
differentiable function,
so that the model is $\mathbf{U}(1)$-invariant.
The equation is understood in the sense of distributions.
Physically, equation  (\ref{KG}) describes  the Klein--Gordon field
coupled to a nonlinear oscillator located at $x=0$,
with
$a(\abs{\psi}^2)\psi$ being the oscillator force.
Equation (\ref{KG})
admits finite energy  solutions of the form
$\phi(x,\omega)e^{-\jj\omega t}$, $\omega\in\R$,
called  \emph{solitary waves}.
The  solitary waves form a two-dimensional \emph{solitary manifold}
in the Hilbert space of finite energy states of the system.

We will
determine conditions for orbital and spectral stability of solitary waves,
formulating the results in terms of values of $a$ and $a'$ evaluated
at $\abs{\phi(0,\omega)}^2$;
see Theorem~\ref{theorem-stability} below.
Then, in Theorem~\ref{theorem-d-lambda},
we give the complete description of the spectrum of the
linearization at a solitary wave.

Equation \eqref{KG}
was proposed to model electron's transitions
between Bohr's \emph{quantum orbits},
which is one of the fundamental problems of Quantum Mechanics.
More precisely,
\eqref{KG} models the interaction of the electron
(represented by the anharmonic oscillator)
with the electromagnetic
radiation (represented by the Klein--Gordon field).
The global attraction of any finite energy solution
to the set of all
solitary waves in local energy norms,
which was established in \cite{MR2308860},
can be interpreted as the
relaxation of a perturbed electron
to the Bohr orbit, where it no longer loses the energy
via the radiation,
in the complete agreement with Bohr's postulate
on quantum jumps.


We pursue further properties of the model \eqref{KG}
since it is a convenient playground
for establishing asymptotic stability results,
similarly to
\cite{buslaev2008asymptotic,komech2012asymptotic},
allowing one to explicitly check
all the spectral properties of the linearized equation.
In the present article,
we will obtain the spectral and orbital stability results.
Moreover, we obtain the complete description of the spectrum
of the linearized equation:
these results will be needed for the subsequent proof
of asymptotic stability.
We point out that many pieces of
the spectral analysis which we develop in the present article
can be carried over essentially verbatim to models similar to \eqref{KG},
such as the models where the concentrated nonlinearity
is substituted by its regularized versions,
such as the self-interaction based on the mean field \cite{MR2526405}.

We mention that the local and global well-posedness
of \eqref{KG} has already been proved in \cite{MR2308860}.
In \cite{kopylova2009asymptotic,kopylova2010asymptotic},
asymptotic stability of solitary waves was obtained for discrete 
Schr\"odinger and Klein-Gordon equations.
Let us also mention that related results
on local well-posedness,
orbital stability, and linear instability
of solitary waves in the nonlinear Klein--Gordon equation
in the external $\delta$-function potential
were obtained in \cite{csobo2019stability}.  

The paper is organized as follows.
The model and its solitary wave solutions are described
in Section~\ref{ndsec}.
The linearization at a solitary wave is
carried out in Section~\ref{section-3},
where the standard properties of the linearized operator
are obtained.
The detailed structure of the spectrum
of the linearized operator
is derived in
Section~\ref{section-4}
(see Theorem~\ref{theorem-d-lambda}).

\section{The model}
\label{ndsec}

We define
$\varPsi(x,t)
=
\begin{bmatrix}
\psi(x,t)\\\pi(x,t)
\end{bmatrix}
\in\C^2
$
and rewrite (\ref{KG}) in the vector form:
\begin{equation}\label{KGE1}
\dot\varPsi(t)
=\begin{bmatrix}
0&1\\
\p_x^2-m^2&0
\end{bmatrix}
\varPsi(t)+\delta(x)\begin{bmatrix}0\\a(\abs{\psi}^2)\psi\end{bmatrix},
\qquad
\varPsi=\begin{bmatrix}
\psi\\\pi
\end{bmatrix}\in\C^2,
\qquad
\varPsi\big|_{t=0}=\varPsi_0:=\begin{bmatrix}\psi_0\\\pi_0
\end{bmatrix}.
\end{equation}
The oscillator force $a(\abs{\psi}^2)\psi$
admits a real-valued potential,
\begin{equation}\label{P}
  a(\abs{\psi}^2)\psi=-\nabla_{\Re\psi,\Im\psi} U(\psi),
\qquad\psi\in\R^2,
\qquad
U\in C^2(\R^2),
\end{equation}
where $U(\psi)=u(\abs{\psi}^2)$, with
$
u(\tau)=\frac 1 2\int_0^\tau a(s)\,ds$.
Then (\ref{KGE1}) is formally a Hamiltonian system
with the Hamiltonian functional
\begin{equation}\label{H}
 {\cH}(\varPsi)=\frac 12
\int_{\R}\Big(|\pi|^2+|\p_x\psi|^2+m^2|\psi|^2\Big) \,dx+U(\psi(0)),
\qquad\varPsi =\begin{bmatrix}\psi(x)\\\pi(x)\end{bmatrix}
\in H^1(\R)\times L^2(\R),
\end{equation}
which is conserved for sufficiently regular finite energy solutions.
Equation (\ref{KG}) is $\mathbf{U}(1)$-invariant:
if $\psi(x,t)$ is a solution, then so is $e^{\jj\theta}\psi(x,t)$
for any $\theta\in\R$.
The N\"other theorem implies the \emph{charge} conservation:
the value of the functional
\begin{equation}\label{Q}
\calQ(\varPsi)
=-\frac 1 2\Im\int_{\R}
\big(\overline\psi(x)\pi(x)
-\overline\pi(x)\psi(x)\big)\,dx,
\qquad
\varPsi=\begin{bmatrix}
\psi(x)\\\pi(x)
\end{bmatrix},
\end{equation}
is conserved for solutions to \eqref{KG}.

The local and global existence result for the Cauchy problem (\ref{KGE1})
proved in~\cite[Theorem 2.1]{MR2308860}:
\begin{theorem}\label{locex}
Assume that the potential is represented by
$U(\psi)=u(\abs{\psi}^2)$ with $u\in C^2(\R)$
and that $U$ satisfies the inequality
\[
U(\psi)\ge A-B|\psi|^2\quad\mbox{for}\quad\psi\in\C,
\qquad\mbox{where}\quad A\in\R,\quad 0\le B<m.
\]
Then:
\begin{enumerate}
\item
   For every $\varPsi_0\in {\calE}:=H^1(\R)\oplus L^2(\R)$
the Cauchy problem (\ref{KGE1})
   has a unique solution
   $\varPsi\in C_b(\R,{\calE})$.
\item
The energy of $\varPsi$ is conserved:
for all $t\in\R$,
   $\cH(\varPsi(t))=\cH(\varPsi(0))$.
\item
   There exists $\varLambda(\varPsi_0)>0$ such that
$
\sup\limits\sb{t\in\R}\Vert{\varPsi(t)}\Vert\sb{\calE}
\le\varLambda(\varPsi_0)<\infty$.
\end{enumerate}
\end{theorem}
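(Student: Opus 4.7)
The plan is to prove this via the standard two-step scheme for semilinear Hamiltonian evolution equations: first local well-posedness by Duhamel plus a contraction mapping argument, then globalization via the conserved and coercive Hamiltonian. Since the result is already published in \cite{MR2308860}, I would reproduce the essential argument rather than hunt for refinements.

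For \emph{local existence and uniqueness}, I would rewrite \eqref{KGE1} as the integral equation
\begin{equation*}
\varPsi(t)=W(t)\varPsi_0+\int_0^t W(t-s)\begin{bmatrix}0\\ \delta(x)\,a(\abs{\psi(0,s)}^2)\psi(0,s)\end{bmatrix}ds,
\end{equation*}
where $W(t)$ is the free Klein--Gordon group on $\calE=H^1(\R)\oplus L^2(\R)$; this group is uniformly bounded in $\calE$ because the free energy is conserved. The nonlinear term involves the trace $\psi(0,s)$, which is controlled by the one-dimensional Sobolev embedding $H^1(\R)\hookrightarrow C_b(\R)$, and I would check that $t\mapsto \delta(x)\,a(\abs{\psi(0,t)}^2)\psi(0,t)$ is continuous in time with values in $H^{-1}(\R)$, so that the Duhamel integral lives in $\calE$. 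Because $a\in C^1$ and because $\psi\mapsto\psi(0)$ is Lipschitz from $H^1$ to $\C$, the nonlinear map is locally Lipschitz on balls in $\calE$; a standard contraction on $C([0,T],\calE)$ for small $T=T(\|\varPsi_0\|_\calE)$ produces a unique local solution.

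For \emph{energy conservation}, I would first obtain it formally by differentiating $\cH(\varPsi(t))$ and using \eqref{P} together with \eqref{KGE1}. To justify this rigorously for merely $\calE$-valued solutions, I would approximate $\varPsi_0$ by smooth, compactly supported data $\varPsi_0^{(n)}\in H^2\oplus H^1$ (for which the formal calculation is legitimate, since $\psi^{(n)}$ is continuous in time as an $H^2$-valued map and hence $\psi^{(n)}(0,t)$ is $C^1$ in $t$), then pass to the limit using the continuous dependence of the local flow on initial data together with continuity of $U$ in the $H^1$-topology.

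For the \emph{a priori bound} and thus global existence, I would use the coercivity hypothesis $U(\psi)\ge A-B\abs{\psi}^2$ combined with the sharp one-dimensional trace inequality
\begin{equation*}
\abs{\psi(0)}^2\le \|\psi\|_{L^2}\|\partial_x\psi\|_{L^2}\le \tfrac{1}{2m}\int_\R\bigl(m^2\abs{\psi}^2+\abs{\partial_x\psi}^2\bigr)dx.
\end{equation*}
Substituting into \eqref{H} yields
\begin{equation*}
\cH(\varPsi)\ge \tfrac12\bigl(1-\tfrac{B}{m}\bigr)\int_\R\bigl(\abs{\pi}^2+\abs{\partial_x\psi}^2+m^2\abs{\psi}^2\bigr)dx+A,
\end{equation*}
and because $B<m$, conservation of $\cH$ gives a uniform-in-time bound on $\|\varPsi(t)\|_\calE$. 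The usual bootstrap (the local existence time depends only on $\|\varPsi(t)\|_\calE$) then extends the solution to all of $\R$. The only delicate point in the whole argument is the approximation step for energy conservation, because the nonlinearity is a distribution supported at a single point; this is handled by working with the regularized semigroup on $H^2\oplus H^1$ and exploiting the explicit form of $W(t)$ to ensure that smoothness propagates in time.
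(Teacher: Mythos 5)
The paper does not prove this theorem at all: it is quoted verbatim from \cite[Theorem 2.1]{MR2308860}, so there is no in-paper argument to compare against. Your sketch follows the same overall strategy as that reference (local solvability by a fixed-point argument for the point-interaction term, energy conservation, then the a priori bound from the coercivity hypothesis via the trace inequality $\abs{\psi(0)}^2\le\norm{\psi}_{L^2}\norm{\p_x\psi}_{L^2}$), and the coercivity computation you give is correct and is exactly what makes the hypothesis $0\le B<m$ work.

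One step is not justified as written. You argue that the source term is continuous in $t$ with values in $H^{-1}(\R)$ and conclude that the Duhamel integral ``lives in $\calE$.'' That inference is one derivative short: $W(t)$ is a bounded group on $L^2(\R)\oplus H^{-1}(\R)$, so continuity of the forcing into $\{0\}\oplus H^{-1}$ only places $\int_0^t W(t-s)\,[0,\ \delta(x)f(s)]^{T}\,ds$ in $C([0,T],L^2\oplus H^{-1})$, not in $H^1\oplus L^2$. What rescues the argument is the smoothing of the point-source propagator: for the one-dimensional Klein--Gordon equation the solution with zero data and source $\delta(x)f(t)$ is, modulo a smooth Bessel convolution, $\tfrac12\int_0^{t-\abs{x}}f(s)\,ds$, whose $x$- and $t$-derivatives are controlled pointwise by $f(t-\abs{x})$; hence $f\in C([0,T],\C)$ already yields a Duhamel term in $C([0,T],\calE)$, and the contraction closes in $C([0,T],\calE)$ (or, as in \cite{MR2308860}, one first closes a contraction for the scalar trace $y(t)=\psi(0,t)$ and then reconstructs $\varPsi$). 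You invoke the explicit form of $W(t)$ only for the energy-conservation approximation step, but it is needed already to make the fixed-point map well defined on $\calE$. With that correction the proposal is a complete and correct reproduction of the cited proof.
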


The main subject of this paper is the analysis of the spectral and orbital
stability of
``quantum stationary states'', or \emph{solitary waves}~\cite{MR901236}, 
which are  finite energy solutions of the form
\begin{equation}\label{SW}
\varPsi(x,t)=\varPhi(x)e^{-\jj\omega t},\qquad\omega\in\R,
\qquad
\varPhi(x)
=\begin{bmatrix}\phi(x)\\ -\jj\omega\phi(x)\end{bmatrix},
\qquad\phi\in H^1(\R).
\end{equation}
Above,
$H^1(\R)$ is the Sobolev space of complex-valued measurable functions
satisfying $\int_{\R} (|\p_x\psi|^2+|\psi|^2\bigr)\,dx<\infty$.
By (\ref{KGE1}),
the frequency  $\omega\in\R$
and the amplitude $\phi(x)$ solve the following
\emph{nonlinear eigenvalue problem}:
\begin{equation}\label{NEP}
\omega^2\phi(x)=-\p_x^2\phi(x)
+m^2\phi(x)-\delta(x)a(\abs{\phi(0)}^2)\phi(0),\qquad x\in\R.
\end{equation}

\begin{definition}\label{dSS}
The solitary manifold $\cS$ is the set  of all amplitudes
$\varPhi(\cdot,\omega)
=\begin{bmatrix}\phi(\cdot,\omega)\\ -\jj\omega\phi(\cdot,\omega)\end{bmatrix}$,
where $\phi(x,\omega)e^{-\jj\omega t}$ is a nonzero solitary wave solution to \eqref{KG},
with all possible $\omega\in\R$.
\end{definition}

\begin{lemma}\label{lemma-sw}
The set $\cS$ is given by
$$
\cS=\Bigl\{
\varPhi(x,\omega)e^{\jj\theta}=
\begin{bmatrix}
\phi(x,\omega)\\ -\jj\omega\phi(x,\omega)
\end{bmatrix}e^{\jj\theta},
\qquad
\phi(x,\omega)=C e^{-\varkappa|x|},\qquad\theta\in[0,2\pi]\Bigr\},
$$
where $|\omega|<m$,
$\varkappa=\sqrt{m^2-\omega^2}>0$,
and $C>0$ satisfies the relation
\begin{eqnarray}\label{a-kappa}
a(C^2)=2\varkappa.
\end{eqnarray}
If $a'(C^2)\ne 0$, then $C$ is locally a $C^1$-function of $\omega$.
\end{lemma}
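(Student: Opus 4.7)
The plan is to solve the nonlinear eigenvalue problem \eqref{NEP} by first analyzing it on $\R\setminus\{0\}$ as a linear ODE, and then enforcing the correct matching conditions at $x=0$ produced by the $\delta$-source.

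First I would observe that on the punctured line $\R\setminus\{0\}$, equation \eqref{NEP} reduces to the homogeneous linear ODE $\partial_x^2\phi=(m^2-\omega^2)\phi$. For $\phi$ to lie in $H^1(\R)$ (and thus correspond to a finite energy solitary wave) one needs exponential decay at $\pm\infty$, which forces $m^2-\omega^2>0$, hence $|\omega|<m$ and $\varkappa:=\sqrt{m^2-\omega^2}>0$. Discarding the exponentially growing mode on each half-line yields $\phi(x)=C_{+}e^{-\varkappa x}$ for $x>0$ and $\phi(x)=C_{-}e^{\varkappa x}$ for $x<0$, for some constants $C_{\pm}\in\C$. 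Continuity at $x=0$ (which is forced by $\phi\in H^1(\R)$) gives $C_+=C_-=:\phi(0)$, so $\phi(x)=\phi(0)e^{-\varkappa|x|}$.

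Next I would extract the condition on $\phi(0)$ from the $\delta$-singularity. Interpreting \eqref{NEP} distributionally, the coefficient of $\delta(x)$ on the right must equal the jump of $-\partial_x\phi$ at the origin. From the explicit form $\phi(x)=\phi(0)e^{-\varkappa|x|}$ one computes $\partial_x\phi(0^+)-\partial_x\phi(0^-)=-2\varkappa\,\phi(0)$, so matching yields $2\varkappa\,\phi(0)=a(|\phi(0)|^2)\phi(0)$. Excluding the trivial solution $\phi\equiv 0$, this reduces to $a(|\phi(0)|^2)=2\varkappa$. Invoking the $\mathbf{U}(1)$-invariance of \eqref{KG}, I can rotate the phase and write $\phi(0)=Ce^{\jj\theta}$ with $C>0$ and $\theta\in[0,2\pi)$, which yields precisely the parametrization asserted, together with the scalar relation $a(C^2)=2\varkappa$.

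Finally, for the last statement I would apply the implicit function theorem to $F(C,\omega):=a(C^2)-2\sqrt{m^2-\omega^2}$ near a point $(C_0,\omega_0)$ with $|\omega_0|<m$ and $F(C_0,\omega_0)=0$. Since $\partial_C F=2C\,a'(C^2)$, the hypothesis $a'(C_0^2)\neq 0$ together with $C_0>0$ gives $\partial_C F(C_0,\omega_0)\neq 0$, so $F$ defines $C$ as a local $C^1$-function of $\omega$. The only nontrivial point here is the reduction to the real scalar equation, which is already handled by the phase factorization above; the remaining steps are standard ODE and distribution-theoretic manipulations, and I do not anticipate a substantial obstacle.
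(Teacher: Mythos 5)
Your proposal is correct and follows essentially the same route as the paper: solve the ODE on $\R\setminus\{0\}$, use $H^1$ decay to force $|\omega|<m$ and $\varkappa>0$, match the $\delta$-coefficient to get $a(C^2)=2\varkappa$, and factor out the phase by $\mathbf{U}(1)$-invariance. The only (harmless) difference is in the last step: the paper differentiates the constraint $\varkappa^2=a(C^2)^2/4$ in $\omega$ to read off $dC/d\omega$, whereas you invoke the implicit function theorem for $F(C,\omega)=a(C^2)-2\sqrt{m^2-\omega^2}$ — an equivalent and, if anything, slightly cleaner justification of the local $C^1$ dependence.
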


\begin{proof}
Equation (\ref{NEP}) implies that
 $\p_x^2\phi(x,\omega)=(m^2-\omega^2)\phi(x,\omega)$, $ x\ne 0$,
and hence  $\phi(x,\omega)=C_\pm e^{-\varkappa_\pm\abs{x}}$ for $\pm x>0$, where $\varkappa_\pm$
satisfy $\varkappa_\pm^2=m^2-\omega^2$.
Since $\phi(\cdot,\omega)\in H^1(\R)$,
we conclude that $\varkappa_\pm>0$,
$|\omega|<m$, and that $\varkappa_\pm=\sqrt{m^2-\omega^2}>0$.
Since the function
$\phi(x,\omega)$ is continuous at $x=0$,
one has
$C_-=C_+=C$.
Thus, the solitary waves are solutions of the form
\begin{equation}\label{swa-0}
\phi(x,\omega)=C e^{-\varkappa |x|},\qquad\varkappa=\sqrt{m^2-\omega^2}> 0,\qquad C\not=0.
\end{equation}
For our convenience,
we assume that $C>0$,
and write solitary wave solutions in the form
\begin{equation}\label{swa}
\phi(x,\omega)
=
e^{\jj\theta}
C e^{-\varkappa |x|},\qquad\varkappa=\sqrt{m^2-\omega^2}> 0,
\qquad C>0,
\quad
\theta\in\R\mod 2\pi.
\end{equation}
The algebraic equation satisfied by the constant $C>0$
is obtained by collecting coefficients
at $\delta(x)$ in (\ref{NEP}):
\begin{equation}\label{ais}
  0=\p_x\phi(0+,\omega)-\p_x\phi(0-,\omega)+a(\abs{\phi(0,\omega)}^2)\phi(0,\omega).
\end{equation}
This implies that $0=-2\varkappa C+a(C^2)C$,
hence $2\varkappa=a(C^2)$.

We note that there is the following relation:
\begin{eqnarray}\label{o-h}
-2\omega
=\frac{d}{d\omega}\varkappa^2
=\frac{d}{d\omega}\frac{a(C^2)^2}{4}
=a(C^2)a'(C^2)C\frac{d C}{d\omega},
\end{eqnarray}
which shows that $C$ is locally a $C^1$-function of $\omega$
as long as $a'(C^2)\ne 0$.
\end{proof}

\section{Linearization at a solitary wave}
\label{section-3}

Let us linearize the nonlinear
Klein--Gordon (\ref{KGE1}) at a solitary wave
\[
e^{-\jj\omega t+\jj\theta}
\varPhi(x,\omega)
=
e^{-\jj\omega t+\jj\theta}
\begin{bmatrix}\phi(x,\omega)\\-\jj\omega\phi(x,\omega)
\end{bmatrix},
\]
where
$\omega\in(-m,m)$, $\theta\in\R\mod 2\pi$,
and $\phi(x,\omega)=C(\omega)e^{-\varkappa|x|}$,
with $\varkappa=\sqrt{m^2-\omega^2}> 0$ and $C(\omega)>0$
(see Lemma~\ref{lemma-sw}).
Substituting
\begin{equation}\label{prol}
\varPsi(x,t)=e^{-\jj\omega t+\jj\theta}(\varPhi(x,\omega)+\calX(x,t)),
\qquad
\calX(x,t)=\begin{bmatrix}\calX_1(x,t)\\\calX_2(x,t)\end{bmatrix}\in\C^2,
\end{equation}
into (\ref{KGE1}), we obtain:
$$
-\jj\omega(\varPhi+\calX)+\dot\calX=
\begin{bmatrix} 0&1\\\p_x^2-m^2 & 0\end{bmatrix}(\varPhi+\calX)+\delta(x)
\begin{bmatrix} 0\\
a(\abs{C+\calX_1}^2)(C+\calX_1)
\end{bmatrix}.
$$
Equality $\varPhi_2=-\jj \omega\varPhi_1$
and equation (\ref{NEP}) lead to
\begin{equation}\label{lin1}
\dot\calX=\begin{bmatrix}\jj\omega & 1\\\p_x^2-m^2 &\jj\omega\end{bmatrix}
\calX(x,t)+\delta(x)
\begin{bmatrix}0\\
a(\abs{C+\calX_1}^2)(C+\calX_1)
-a(\abs{C}^2)C
\end{bmatrix}.
\end{equation}
The first order part of (\ref{lin1}) is given by
\begin{equation}\label{lin3}
\dot\calX(x,t)=
\begin{bmatrix}\jj\omega & 1\\\p_x^2-m^2 &\jj\omega\end{bmatrix}
 \calX(x,t)+\delta(x)\begin{bmatrix} 0\\
a(C^2)\calX_1(0,t)+C^2 a'(C^2) 2\Re \calX_1(0,t)\end{bmatrix}.
\end{equation}
In the case $a'(C^2)\not=0$, the operator in the right-hand side is
$\R$-linear but not $\C$-linear;
to study its spectrum,
it is convenient to rewrite (\ref{lin3}) in the real form.
For $\varPsi=\begin{bmatrix}\psi\\ \dot\psi
\end{bmatrix}\in\C^2$,
we denote
$\Psi=\begin{bmatrix}
\Re\psi\\\Im\psi\\\Re\p_t\psi\\\Im\p_t\psi
\end{bmatrix}\in\R^4$.
The solitary wave
$\varPhi(x,\omega)e^{-\jj\omega t}\in\C^2$
is then represented by
$e^{\bfJ\omega t}\Phi(x,\omega)\in\R^4$,
where 
\[
\Phi(x,\omega)=
\begin{bmatrix}
\phi(x,\omega)\\0\\0\\-\omega\phi(x,\omega)
\end{bmatrix},
\qquad
\bfJ=
\begin{bmatrix}
        0     &      1      &  0    &    0\\ 
        -1     &       0      &  0    &    0\\
        0     &       0      &  0    &   1\\
        0     &       0      &  -1    &    0
\end{bmatrix}.
\]
We identify 
the perturbation
$\mathcal{X}\in\C^2$  with the real vector
$X=
\begin{bmatrix}
\Re\mathcal{X}_1
\\
\Im\mathcal{X}_1
\\
\Re\mathcal{X}_2
\\
\Im\mathcal{X}_2
\end{bmatrix}
\in\R^4$.
Then (\ref{lin3}) becomes
\begin{equation}\label{XX}
\dot X(x,t)=
\begin{bmatrix}0 &-\omega & 1& 0\\ \omega & 0 & 0 & 1\\
\p_x^2-m^2 & 0 & 0 &-\omega\\ 0&\p_x^2-m^2& \omega & 0\end{bmatrix}
 X(x,t)+\delta(x)\begin{bmatrix} 0\\0\\\alpha(1+2\kappa)X_{1}(0,t)\\
\alpha X_{2}(0,t)\end{bmatrix},
\end{equation}
where
(cf.~\eqref{a-kappa})
\begin{equation}\label{def-a-beta}
\alpha:=a(C^2)=2\varkappa>0,
\qquad
\kappa:=\fra{C^2 a'(C^2)}{a(C^2)}.
\end{equation}
This gives a system which is $\C$-linear.

\begin{remark}
We note that
the above definition of $\kappa$
is compatible with the pure power case
$a(\tau)=\tau^\kappa$,
$\kappa>0$,
$\tau\ge 0$,
when
$a'(\abs{\psi}^2)\abs{\psi}^2=\kappa a(\abs{\psi}^2)$.
\end{remark}

Let us denote
\begin{equation}\label{D}
\begin{array}{l}
L_\kappa(\omega)=-\p_x^2+m^2-\alpha(1+2\kappa)\delta(x)-\omega^2.
\end{array}
\end{equation}

\begin{remark}
The domains of all operators which we consider
require a careful definition.
For example, $A=-\p_x^2+c\delta(x)$, $c\in\R$,
is defined as $A:\,L^2(\R)\to L^2(\R)$
with the domain
\[
\dom(A)=\{\psi\in H^2(\R_{-},\C)\cup H^2(\R_{+},\C),
\qquad
\p_x\psi(0+)-\p_x\psi(0-)=c\psi(0)
\},
\]
where it is selfadjoint.
\end{remark}

\begin{lemma}[Spectrum of $L_\kappa$]
\label{lemma-sigma-lpm}
Let $\omega\in(-m,m)$, $\kappa\in\R$.
The operator
$L_\kappa(\omega)$
is selfadjoint and satisfies
\begin{eqnarray*}
&&
\sigma\sb{\mathrm{ess}}(L_\kappa(\omega))=[\varkappa^2,+\infty),
\\[1ex]
&&
\sigma\sb{\mathrm{p}}(L_\kappa(\omega))=
\begin{cases}
\emptyset,&\kappa\le -1/2,
\\
\Lambda_\kappa(\omega):=-4\varkappa^2(\kappa+\kappa^2),
&\kappa>-1/2,
\end{cases}
\end{eqnarray*}
where $\varkappa=\sqrt{m^2-\omega^2}$.
The eigenvalue $\Lambda_\kappa(\omega)$ is simple.
\end{lemma}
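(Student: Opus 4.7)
The plan is to reduce the eigenvalue problem for $L_\kappa(\omega)$ to an elementary ODE analysis on the half-lines $x>0$ and $x<0$, matched at $x=0$ by the jump condition that encodes the $\delta$-potential.

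First, I would note that $L_\kappa(\omega) = -\p_x^2 + \varkappa^2 - \alpha(1+2\kappa)\delta(x)$, so it is a one-dimensional Schr\"odinger operator with a single point interaction of strength $-\alpha(1+2\kappa) = -2\varkappa(1+2\kappa)$, shifted by the constant $\varkappa^2$. Selfadjointness on the domain described in the remark preceding the lemma is standard; it can be obtained either from the explicit jump condition (which is symmetric) or from the closed, lower-semibounded quadratic form $\int_\R |\p_x\psi|^2 + \varkappa^2|\psi|^2\, dx - \alpha(1+2\kappa)|\psi(0)|^2$ on $H^1(\R)$ via the KLMN theorem, using the continuous embedding $H^1(\R)\hookrightarrow C_b(\R)$ to control the boundary term.

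For the essential spectrum, I would argue that the difference of resolvents of $L_\kappa(\omega)$ and the free operator $-\p_x^2+\varkappa^2$ is a rank-one (hence compact) perturbation, so Weyl's theorem yields $\sigma_{\mathrm{ess}}(L_\kappa(\omega))=[\varkappa^2,+\infty)$. For the point spectrum, suppose $L_\kappa(\omega)\psi=\Lambda\psi$ with $\psi\in\dom(L_\kappa(\omega))\setminus\{0\}$. On each half-line, $\psi$ solves $-\p_x^2\psi+(\varkappa^2-\Lambda)\psi=0$. An $L^2$-solution can exist only if $\Lambda<\varkappa^2$, in which case, setting $\mu=\sqrt{\varkappa^2-\Lambda}>0$, the only $L^2$-branches are $\psi(x)=A_\pm e^{\mp\mu x}$ for $\pm x>0$. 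Continuity at $x=0$ forces $A_+=A_-=:A$, so the eigenspace is at most one-dimensional; the jump condition $\p_x\psi(0+)-\p_x\psi(0-)=-\alpha(1+2\kappa)\psi(0)$ becomes $-2\mu A=-2\varkappa(1+2\kappa)A$, i.e.\ $\mu=\varkappa(1+2\kappa)$. Since $\mu>0$ is required, a bound state exists precisely when $\kappa>-1/2$, and then
\begin{equation*}
\Lambda=\varkappa^2-\mu^2=\varkappa^2\bigl(1-(1+2\kappa)^2\bigr)=-4\varkappa^2(\kappa+\kappa^2)=\Lambda_\kappa(\omega),
\end{equation*}
giving the claimed simple eigenvalue. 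For $\Lambda\ge\varkappa^2$ one checks directly that the half-line solutions are bounded oscillations or constants, hence not in $L^2(\R)$, ruling out embedded eigenvalues; and the cases $\kappa=-1/2$ (no point interaction at all) and $\kappa<-1/2$ (repulsive coupling) admit no $L^2$-eigenfunction.

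The argument is essentially a textbook computation, and I expect no real obstacle beyond bookkeeping; the only mildly delicate point is making sure that the boundary case $\kappa=-1/2$ is excluded cleanly (where the $\delta$-term disappears entirely, so $L_{-1/2}(\omega)$ is the purely continuous free operator on a constant background) and that the selfadjointness domain is consistent with the jump condition used when matching at $x=0$.
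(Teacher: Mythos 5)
Your proposal is correct and follows essentially the same route as the paper: the paper's proof is exactly the explicit ODE matching on the two half-lines, with the jump condition $\alpha(1+2\kappa)=2\sqrt{\varkappa^2-\Lambda}$ forcing $\kappa>-1/2$ and yielding $\Lambda=-4\varkappa^2(\kappa+\kappa^2)$. Your added justifications of selfadjointness (KLMN/form methods) and of the essential spectrum (rank-one resolvent difference plus Weyl) are standard and consistent with what the paper leaves implicit.
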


\begin{proof}
Solving the equation
\[
(-\p_x^2-\alpha(1+2\kappa)\delta(x)+\varkappa^2)\psi=\Lambda\psi,
\qquad
\psi\in L^2(\R),
\]
we find that
$\Lambda$ has to satisfy $\Lambda<\varkappa^2$
and that
\begin{eqnarray}\label{l-psi}
\psi(x)=C e^{-\abs{x}\sqrt{\varkappa^2-\Lambda}},\qquad x\in\R,\quad C\ne 0,
\end{eqnarray}
while $\Lambda$
is obtained from the jump condition at $x=0$:
\[
\alpha(1+2\kappa)=2\sqrt{\varkappa^2-\Lambda}.
\]
This shows that there could only be an eigenvalue
if $\kappa>-1/2$.
Taking into account that $\alpha=2\varkappa$,
one arrives at
$\varkappa^2(1+2\kappa)^2=\varkappa^2-\Lambda$,
so
$\Lambda=-4\varkappa^2(\kappa+\kappa^2)$.
In particular,
setting $\kappa=0$, one obtains the point spectrum $\{0\}$ of $L_0$;
by \eqref{NEP},
the corresponding eigenvector is $\phi$.
\end{proof}

In terms of operators \eqref{D},
the system (\ref{XX}) reads as
\begin{equation}\label{lin4}
\dot X(x,t)=\bfA(\omega,\kappa) X(x,t),
\qquad
\bfA(\omega,\kappa):=
\begin{bmatrix}
0&-\omega&1&0\\
\omega&0&0&1\\
-L_\kappa(\omega)-\omega^2&0&0&-\omega\\
0&-L_0(\omega)-\omega^2&\omega&0
\end{bmatrix}.
\end{equation}
Theorem~\ref{locex} generalizes to equation (\ref{lin4}):
for every initial function $X(x,0)=X_0\in\calE$,
the equation admits a unique solution $X(x,t)\in C_b(\R,\calE)$.
Denote
\[
\bmSigma=\begin{bmatrix}0&I_2\\-I_2&0\end{bmatrix},
\qquad
\bfH(\omega,\kappa)=
\begin{bmatrix}
L_\kappa+\omega^2&0&0&\omega
\\
0&L_0+\omega^2&-\omega&0
\\
0&-\omega&1&0
\\
\omega&0&0&1
\end{bmatrix};
\]
then the operator $\bfA(\omega,\kappa)$ from \eqref{lin4}
corresponding to a linearization at the solitary wave
is factored into
\[
\bfA(\omega,\kappa)=\bmSigma\bfH(\omega,\kappa).
\]
Denote
\begin{eqnarray}
\bfG_1=\bfG_1^{-1}=\begin{bmatrix}
1&0&0&0\\
0&0&0&1\\
0&0&1&0\\
0&1&0&0
\end{bmatrix},
\qquad
\bfG_2=\bfG_2^{-1}=
\begin{bmatrix}
I_2&0\\0&\sigma_2
\end{bmatrix}
\end{eqnarray}
and consider the conjugated version of $\bfH(\omega,\kappa)$:
\begin{eqnarray}\label{def-tilde-h}
\tilde\bfH(\omega,\kappa)
=
\bfG_2\bfG_1\bfH(\omega,\kappa)\bfG_1^{-1}\bfG_2^{-1}
=
\begin{bmatrix}
L_\kappa+\omega^2&\omega&0&0\\
\omega&1&0&0\\
0&0&L_0+\omega^2&\omega\\
0&0&\omega&1\\
\end{bmatrix}
=:
\begin{bmatrix}
H_\kappa(\omega)&0\\
0&H_0(\omega)
\end{bmatrix}.
\end{eqnarray}

We start with the spectrum of
\begin{eqnarray}\label{def-h}
H_\kappa(\omega)
=
\begin{bmatrix}
L_\kappa+\omega^2&\omega
\\
\omega&1
\end{bmatrix}.
\end{eqnarray}

\begin{lemma}
\label{lemma-sigma-h-12}
For any
$\kappa\in\R$,
the essential spectrum of $H_\kappa(\omega)$
is given by
\begin{enumerate}
\item
\[
\sigma\sb{\mathrm{ess}}(H_\kappa(\omega))
=
\begin{cases}
[m^2,+\infty),&\omega=0,
\\
[c^{-}(\omega),1]\cup[c^{+}(\omega),+\infty),&\omega\in(-m,m)\setminus\{0\},
\end{cases}
\]
where 
\begin{eqnarray}\nonumber
c^{\pm}(\omega)=\frac{m^2+1\pm\sqrt{(m^2-1)^2+4\omega^2}}2.
\end{eqnarray}
We note that
$0<c^{-}(\omega)\le\min\{1,m^2\}$,
$c^{+}(\omega)\ge\max\{1,m^2\}$.
\item
\[
\sigma\sb{\mathrm{p}}(H_\kappa(\omega))
=
\begin{cases}
\emptyset,&\kappa\le -1/2,\quad\omega\in(-m,m)\setminus\{0\},
\\
\{1\},&\kappa\le -1/2,\quad \omega=0,
\\
\{\lambda^\pm_\kappa(\omega)\},&\kappa>-1/2,\quad\omega\in(-m,m),
\end{cases}
\]
with the eigenvalues $\lambda^\pm_\kappa(\omega)$ given by
\begin{eqnarray}\label{lambda-pm}
\lambda^\pm_\kappa(\omega)
=\fra{
\Big(
\Lambda_\kappa+\omega^2+1
\pm\sqrt{(\Lambda_\kappa+\omega^2+1)^2-4\Lambda_\kappa}\Big)}{2},
\end{eqnarray}
with $\Lambda_\kappa(\omega)=-4\varkappa^2(\kappa+\kappa^2)$
(when $\kappa>-1/2$)
from Lemma~\ref{lemma-sigma-lpm}.
In particular,
$0\in\sigma\sb{\mathrm{p}}(H_\kappa(\omega))$
if and only if $\kappa=0$,
and
\[
\sigma\sb{\mathrm{p}}(H_0(\omega))=\{0\}\cup\{\omega^2+1\},
\]
with the corresponding eigenvectors
\[
\psi_{0}(x)=
\begin{bmatrix}
\phi(x)\\-\omega\phi(x)
\end{bmatrix},
\qquad
\psi_{\omega^2+1}(x)=
\begin{bmatrix}
\omega\phi(x)\\\phi(x)
\end{bmatrix}.
\]
\end{enumerate}
\end{lemma}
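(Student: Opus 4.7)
The plan is to exploit the $2\times 2$ block structure of $H_\kappa(\omega)$ by a Schur--complement reduction, translating the spectral problem for $H_\kappa(\omega)$ into one for the scalar operator $L_\kappa(\omega)$ already settled in Lemma~\ref{lemma-sigma-lpm}. I would first verify selfadjointness of $H_\kappa(\omega)$ on $\dom(L_\kappa(\omega))\oplus L^2(\R)$; this is immediate since $H_\kappa(\omega)$ is a bounded symmetric perturbation of the selfadjoint block-diagonal operator $\mathrm{diag}(L_\kappa(\omega)+\omega^2,\,1)$. Then, for any $\lambda\ne 1$, the Schur complement of the $(2,2)$ entry $1-\lambda$ yields the factorization
\[
H_\kappa(\omega)-\lambda
=
\begin{bmatrix} I & \omega/(1-\lambda)\\ 0 & I\end{bmatrix}
\begin{bmatrix} L_\kappa(\omega)-\Lambda(\lambda) & 0\\ 0 & 1-\lambda\end{bmatrix}
\begin{bmatrix} I & 0\\ \omega/(1-\lambda) & I\end{bmatrix},
\qquad
\Lambda(\lambda):=\frac{\lambda(\lambda-1-\omega^2)}{\lambda-1}.
\]
The outer triangular factors are bounded and boundedly invertible, so $\lambda\in\sigma(H_\kappa(\omega))$ (respectively $\sigma_{\mathrm{ess}}$, $\sigma_{\mathrm{p}}$) iff $\Lambda(\lambda)$ lies in the corresponding part of $\sigma(L_\kappa(\omega))$.

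For the essential spectrum I would compute $\Lambda^{-1}([\varkappa^2,\infty))$ using Lemma~\ref{lemma-sigma-lpm}. The equation $\Lambda(\lambda)=t$ with $t\ge\varkappa^2$ is the quadratic $\lambda^2-(1+\omega^2+t)\lambda+t=0$ with two real branches
\[
\lambda^\pm(t)=\tfrac12\bigl(1+\omega^2+t\pm\sqrt{(1+\omega^2+t)^2-4t}\bigr);
\]
at $t=\varkappa^2$ these give $c^\pm(\omega)$, via the identity $(1+m^2)^2-4(m^2-\omega^2)=(m^2-1)^2+4\omega^2$, and the Vieta relations $\lambda^+\lambda^-=t$, $\lambda^++\lambda^-=1+\omega^2+t$ force $\lambda^+(t)\to\infty$ and $\lambda^-(t)\to 1$ as $t\to\infty$. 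Monotonicity of each branch on $[\varkappa^2,\infty)$, verified in one line by differentiating the quadratic, then yields the intervals $[c^+(\omega),\infty)$ and $[c^-(\omega),1]$. The case $\omega=0$ is handled separately and directly: $H_\kappa(0)=\mathrm{diag}(L_\kappa(0),\,1)$ is block-diagonal and the essential spectrum of its first block is $[m^2,\infty)$ by Lemma~\ref{lemma-sigma-lpm}.

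For the point spectrum the same Schur equivalence gives that $\lambda\ne 1$ is an eigenvalue of $H_\kappa(\omega)$ iff $\Lambda(\lambda)\in\sigma_{\mathrm{p}}(L_\kappa(\omega))$, with a matching eigenvector $\psi=(\psi_1,\omega\psi_1/(\lambda-1))^T$ built from an $L_\kappa$-eigenvector $\psi_1$ at $\Lambda(\lambda)$. When $\kappa\le -1/2$ Lemma~\ref{lemma-sigma-lpm} gives $\sigma_{\mathrm{p}}(L_\kappa)=\emptyset$, so no such $\lambda$ arises; when $\kappa>-1/2$, setting $\Lambda(\lambda)=\Lambda_\kappa(\omega)$ produces the quadratic $\lambda^2-(1+\omega^2+\Lambda_\kappa)\lambda+\Lambda_\kappa=0$ whose roots are precisely $\lambda^\pm_\kappa(\omega)$ in~\eqref{lambda-pm}. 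The case $\lambda=1$ I would treat separately by inspection of $(H_\kappa-1)\psi=0$: the second row forces $\omega\psi_1=0$, so $\lambda=1$ can be an eigenvalue only when $\omega=0$, in which case the decoupling makes every $\psi_2\in L^2(\R)$ an eigenvector. For $\kappa=0$, the explicit eigenvectors $\psi_0$ and $\psi_{\omega^2+1}$ follow by substituting $\psi_1=\phi(\cdot,\omega)$ (the zero-mode of $L_0$ from Lemma~\ref{lemma-sigma-lpm}) into $\psi_2=\omega\psi_1/(\lambda-1)$ at $\lambda^-_0=0$ and $\lambda^+_0=\omega^2+1$, up to a trivial rescaling in the second case.

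The main obstacle will be to verify that the eigenvalues $\lambda^\pm_\kappa(\omega)$ lie \emph{strictly outside} the essential spectrum $[c^-(\omega),1]\cup[c^+(\omega),\infty)$, so that they are genuine isolated eigenvalues rather than embedded in the continuum. This should follow from the strict inequality $\Lambda_\kappa(\omega)<\varkappa^2$ supplied by Lemma~\ref{lemma-sigma-lpm} combined with the monotonicity of the two $\lambda^\pm$-branches established above; a careful case analysis, separating the signs of $\kappa$ and small versus large $|\omega|$, will be needed to cover all edge behavior at the endpoints $c^\pm(\omega)$ and $1$.
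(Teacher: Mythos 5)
Your proposal is correct and follows essentially the same route as the paper: the point spectrum is obtained from exactly the same Schur-complement relation $\Lambda(\lambda)=\lambda+\frac{\lambda\omega^2}{1-\lambda}$ reducing everything to Lemma~\ref{lemma-sigma-lpm}, and your pullback of $[\varkappa^2,\infty)$ through $\Lambda$ yields the same inequality that the paper derives for the essential spectrum via Weyl's theorem. The ``main obstacle'' you anticipate at the end is not actually required by the statement (which only identifies $\sigma_{\mathrm{p}}$) and is in any case automatic from your reduction, since $\Lambda_\kappa<\varkappa^2$ places $\lambda^\pm_\kappa$ outside $\Lambda^{-1}([\varkappa^2,\infty))$.
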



\begin{proof}
Let us compute the essential spectrum of
$H_\kappa(\omega)$.
Since
$\delta(x)$ is a relatively compact perturbation
of the Laplacian in one dimension,
by the Weyl theorem,
when computing the essential spectrum
of $H_\kappa(\omega)$,
we can replace $L_\kappa$
in \eqref{def-h}
by
$L_\kappa^{(0)}=-\p_x^2+m^2-\omega^2$.
Then the values of the essential spectrum
are those $\lambda\in\C$ such that
\begin{eqnarray}\label{sdf}
\det \begin{bmatrix}
\xi^2+\omega^2-\lambda &\omega
\\
\omega&1-\lambda
\end{bmatrix}
=(1-\lambda)\Big(\xi^2+m^2-\lambda-\frac{\omega^2}{1-\lambda}\Big)=0
\qquad
\mbox{for some $\xi\in\R$.}
\end{eqnarray}
Thus, either $\lambda=1$, or,
expressing $\xi^2$ in terms of $\lambda$, we arrive at the inequality
\begin{eqnarray}\label{asd0}
-m^2+\lambda+\frac{\omega^2}{1-\lambda}
=\frac{(\lambda-m^2)(1-\lambda)+\omega^2}{1-\lambda}
=\frac{\lambda^2-\lambda(m^2+1)+m^2-\omega^2}{\lambda-1}\ge 0.
\end{eqnarray}
There are two roots
$\lambda=c^{\pm}(\omega)$ of the numerator and
root $\lambda=1$ of the denominator;
these roots satisfy the inequalities
\[
0<c^{-}(\omega)<1<c^{+}(\omega),
\qquad
\omega\in(-m,m)\setminus\{0\}.
\]
(We note that when $\omega=0$,
one has $c^{-}(0)=1$ if $m^2\ge 1$
and
$c^{+}(0)=1$ if $m^2\le 1$.)
We conclude that the essential spectrum
(the set where \eqref{sdf} is satisfied)
is given by
the intervals
$$
\lambda\in [c^{-}(\omega),1]\cup [c^{+}(\omega),+\infty).
$$

Let us now study the point spectrum of $H_\kappa(\omega)$.
First let us consider when $\lambda=1$ is an eigenvalue of
$H_\kappa(\omega)$.
One can see from \eqref{def-h}
that this is only possible 
when $\omega=0$
(with eigenfunctions
of the form $\begin{bmatrix}0\\u_2(x)\end{bmatrix}$,
$u_2\in L^2(\R)$);
in this case, the rest of the point spectrum
comes from Lemma~\ref{lemma-sigma-lpm}
(($\Lambda_\kappa(0)=-4m^2(\kappa+\kappa^2)$
in the case $\kappa>-1/2$),
agreeing with
$\lambda^{-}_\kappa(0)=1$
and
$\lambda^{+}_\kappa(0)=\Lambda_\kappa(0)$
from \eqref{lambda-pm}.

Now we consider the case
$\lambda\ne 1$. The operator
\begin{eqnarray}\label{def-h-1}
H_\kappa(\omega)-\lambda I_2
=
\begin{bmatrix}
L_\kappa+\omega^2-\lambda&\omega
\\
\omega&1-\lambda
\end{bmatrix}
\end{eqnarray}
has zero eigenvalue if and only if so does
the Schur complement of $1-\lambda$,
which is given by
\[
L_\kappa(\omega)+\omega^2
-\lambda-\frac{\omega^2}{1-\lambda}
=L_\kappa(\omega)
-\lambda-\frac{\lambda\omega^2}{1-\lambda}.
\]
Thus,
$\lambda\ne 1$ is an eigenvalue of $H_\kappa(\omega)$
if and only if
\begin{eqnarray}\label{lambda-lambda}
\Lambda=\lambda+\frac{\lambda\omega^2}{1-\lambda}
\end{eqnarray}
is an eigenvalue of
$L_\kappa(\omega)$;
see Figure~\ref{kg-delta-fig-spectrum-h}.
Now the proof follows from Lemma~\ref{lemma-sigma-lpm},
leading to the expressions \eqref{lambda-pm}.
Let us point out that the eigenfunctions
corresponding to $\lambda\ne 1$
are given by (cf. \eqref{l-psi})
\begin{eqnarray}\label{h-psi}
u\sp{\pm}(x)
=
\begin{bmatrix}
1-\lambda^\pm_\kappa
\\
-\omega
\end{bmatrix}
e^{-\abs{x}\sqrt{\varkappa^2-\Lambda}}.
\end{eqnarray}
\end{proof}

\begin{figure}[ht]
\begin{center}
\setlength{\unitlength}{1.3pt}
\begin{picture}(0,100)(25,-50)
\font\gnuplot=cmr10 at 10pt
\gnuplot
\put(0,-30){\circle*{3}}
\put(1,-38){$\Lambda_\kappa$}
\put(-27,4){$\lambda^{\!-}_{\!\kappa}$}
\put(-23,0){\circle*{3}}
\put(0,0){\circle*{3}}
\put(1,-9){$0$}
\put(43,0){\circle*{3}}
\put(48.5,0){\circle*{3}}
\put(35,-7.5){$1$}
\put(40,4){$\lambda^{\!+}_{\!\kappa}$}
\put(48,-9){$\lambda^{\!+}_{\!0}$}
\put(-29,52){$\sigma(L_\kappa)$}
\put(90,5){$\sigma(H_\kappa(\omega))$}
\put(-13,29){$\varkappa^2$}
\put(18.5,3){$c^{-}$}
\put(58,3){$c^{+}$}
\put(64,35){$\Lambda=\lambda+\fra{\lambda\omega^2}{(1-\lambda)}$}
\put(-50, 0){\line(1,0){150}}
\put(0,-60,0){\line(0,1){120}}
\put(0,-60,0){\line(0,1){120}}

\multiput(-21,-30)(4.1,0){16}{\line(1,0){1}}
\multiput(-23,-30)(0,4){8}{\line(0,1){1}}
\multiput(43,-30)(0,4){8}{\line(0,1){1}}
\multiput(0,30)(3.99,0){15}{\line(1,0){1}}
\multiput(19.5,30)(0,-4){8}{\line(0,-1){1}}
\multiput(58.5,30)(0,-4){8}{\line(0,-1){1}}

\multiput(38,-60)(0,7){18}{\line(0,1){0.5}}
\multiput(-50,-60)(5,5){24}{\line(0,1){0.5}}

\linethickness{1pt}
\qbezier(-50,-59)(20,14)(32,60)
\qbezier(41,-60)(41,-0)(70,55)
\linethickness{3pt}
\put(0,30){\line(0,1){30}}
\put(58.5,0){\line(1,0){59}}
\put(19.5,0){\line(1,0){18.5}}
\end{picture}
\end{center}
\caption{
\small
The relation between
$\sigma(L_\kappa)$
and
$\sigma(H_\kappa(\omega))$
for $\omega\ne 0$, $\kappa>0$.}
\label{kg-delta-fig-spectrum-h}
\end{figure}
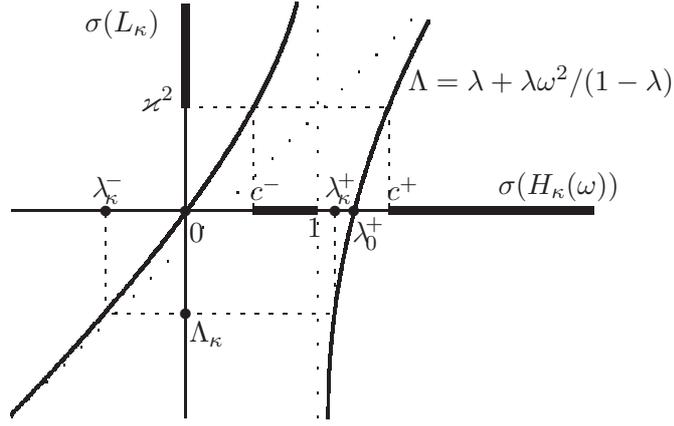

Lemma \ref{lemma-sigma-h-12}
gives the spectrum of the selfadjoint operator
$\bfH(\omega,\kappa):\,L^2(\R,\C^4)\to L^2(\R,\C^4)$:

\begin{corollary}
For any $\kappa\in\R$,
the essential spectrum of $\bfH(\omega,\kappa)$is given by
\[
\sigma\sb{\mathrm{ess}}(\bfH(\omega,\kappa))
=[c^{-}(\omega),1]\cup [c^{+}(\omega),\infty).
\]
For $\kappa<0$, one has $\bfH(\omega,\kappa)\ge 0$,
with eigenvalue $\lambda=0$ of multiplicity one.
For $\kappa=0$, one has
$\bfH(\omega,0)\ge 0$,
with eigenvalue $\lambda=0$ of multiplicity two.
For $\kappa>0$ (when $\Lambda_\kappa(\omega)$
from Lemma~\ref{lemma-sigma-lpm}
is negative),
the operator $\bfH(\omega,\kappa)$ 
has exactly one simple negative eigenvalue  $\lambda_\kappa^{-}$
given by \eqref{lambda-pm}.
\end{corollary}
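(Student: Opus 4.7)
The plan is to exploit the block-diagonal form \eqref{def-tilde-h} to reduce everything to the $2\times 2$ blocks $H_\kappa(\omega)$ and $H_0(\omega)$ already analyzed in Lemma~\ref{lemma-sigma-h-12}. Since $\bfG_1$ and $\bfG_2$ are Hermitian involutions, hence unitary, the map $\bfH(\omega,\kappa)\mapsto \tilde\bfH(\omega,\kappa)=\mathrm{diag}(H_\kappa(\omega),H_0(\omega))$ is a unitary equivalence, so $\sigma(\bfH(\omega,\kappa))=\sigma(H_\kappa(\omega))\cup\sigma(H_0(\omega))$ with multiplicities adding. In particular the essential-spectrum claim is immediate, since Lemma~\ref{lemma-sigma-h-12} assigns the same $[c^-(\omega),1]\cup[c^+(\omega),\infty)$ to each block.

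For the discrete spectrum and the sign/multiplicity of its nonpositive part I would apply Vieta's formulas to the quadratic underlying \eqref{lambda-pm}:
\[
\lambda^+_\kappa\lambda^-_\kappa=\Lambda_\kappa(\omega)=-4\varkappa^2\kappa(\kappa+1),
\qquad
\lambda^+_\kappa+\lambda^-_\kappa=\Lambda_\kappa(\omega)+\omega^2+1,
\]
and then split into the three cases. For $\kappa<0$ one checks that $H_\kappa>0$: either $\kappa\le -1/2$, when $H_\kappa$ has no eigenvalues and is bounded below by $c^-(\omega)>0$, or $-1/2<\kappa<0$, when $\Lambda_\kappa>0$ so that both the sum and the product of $\lambda^\pm_\kappa$ are positive, forcing both roots positive. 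Combined with the simple $0$-eigenvalue produced by $H_0$, this yields $\bfH(\omega,\kappa)\ge 0$ with a one-dimensional kernel. For $\kappa=0$ the two diagonal blocks coincide, each producing a simple $0$-eigenvalue, so the kernel of $\bfH$ is two-dimensional. For $\kappa>0$ one has $\Lambda_\kappa<0$, hence $\lambda^+_\kappa\lambda^-_\kappa<0$, so exactly one root, namely $\lambda^-_\kappa(\omega)$, is negative; this is automatically a discrete (not embedded) eigenvalue since $\sigma_{\mathrm{ess}}(\bfH)\subset(0,\infty)$, and the $H_0$ block contributes no negative spectrum, so $\lambda^-_\kappa$ is the unique simple negative eigenvalue.

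I do not expect any substantive obstacle; the argument is essentially bookkeeping on top of Lemma~\ref{lemma-sigma-h-12}. The one step requiring a moment of attention is the Vieta sign analysis in the subcase $-1/2<\kappa<0$, where one must observe that $\Lambda_\kappa+\omega^2+1>0$ (clear from $\Lambda_\kappa>0$) in order to exclude a pair of negative roots with positive product.
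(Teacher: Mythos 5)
Your proof is correct and takes essentially the same route the paper intends (the paper gives no separate argument beyond ``Lemma~\ref{lemma-sigma-h-12} gives the spectrum''): unitary block-diagonalization of $\bfH(\omega,\kappa)$ into $H_\kappa(\omega)\oplus H_0(\omega)$ via $\bfG_1,\bfG_2$, then read off the spectrum from that lemma, with the Vieta sign analysis of $\lambda^\pm_\kappa$ supplying the positivity bookkeeping the paper leaves implicit.
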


The following theorem
gives the orbital stability result
for solitary wave solutions to
\eqref{KG}.

\begin{theorem}
\label{theorem-stability}
Assume that there are solitary wave solutions
for $\omega\in (\omega_1,\omega_2)\subset(-m,m)$.
The solitary wave
$\phi(x,\omega)e^{-\jj\omega t}$
is orbitally stable
if and only if
\[
\kappa<\frac{\omega^2}{m^2}.
\]
Above,
$\kappa:=\fra{\abs{\phi(0,\omega)}^2 a'(\abs{\phi(0,\omega)}^2)}
{a(\abs{\phi(0,\omega)}^2)}$
could be negative (see \eqref{def-a-beta}).
\end{theorem}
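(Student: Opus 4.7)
The plan is to apply the Grillakis--Shatah--Strauss (GSS) framework (equivalently, a Vakhitov--Kolokolov-type criterion) for orbital stability of solitary waves in $U(1)$-invariant Hamiltonian systems, leveraging the complete spectral picture of the Hessian already established in the corollary to Lemma~\ref{lemma-sigma-h-12}. The natural Lyapunov functional is
\[
E_\omega(\varPsi)=\cH(\varPsi)-\omega\calQ(\varPsi),
\]
and the nonlinear eigenvalue problem \eqref{NEP} is exactly $E_\omega'(\varPhi_\omega)=0$. Under the real-form reduction of Section~\ref{section-3} and the conjugation by $\bfG_1,\bfG_2$, the Hessian $E_\omega''(\varPhi_\omega)$ is unitarily equivalent to $\tilde\bfH(\omega,\kappa)=H_\kappa(\omega)\oplus H_0(\omega)$ from \eqref{def-tilde-h}.

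Next, I would invoke the spectral dichotomy furnished by the corollary. For $\kappa\le 0$ the operator $\bfH(\omega,\kappa)$ is non-negative with a one-dimensional kernel (exactly the $U(1)$-symmetry direction generated by $\bmSigma\varPhi_\omega$); here orbital stability follows from a standard Lyapunov argument (coercivity of $E_\omega$ modulo the orbit), and the announced condition $\kappa<\omega^2/m^2$ is automatically satisfied since $\omega^2/m^2\ge 0$. For $\kappa>0$ the Hessian has Morse index one (the simple negative eigenvalue $\lambda_\kappa^{-}$), placing us in the classical GSS setting: orbital stability is equivalent to $d''(\omega)>0$, where $d(\omega):=E_\omega(\varPhi_\omega)$. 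A routine computation using $E_\omega'(\varPhi_\omega)=0$ gives $d'(\omega)=-\calQ(\varPhi_\omega)$, hence $d''(\omega)=-\partial_\omega\calQ(\varPhi_\omega)$.

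The core calculation is then explicit. From \eqref{Q} and \eqref{SW} one finds $\calQ(\varPhi_\omega)=\omega\|\phi(\cdot,\omega)\|_{L^2}^2=\omega C^2/\varkappa$. Differentiating the profile relation $a(C^2)=2\varkappa$ from Lemma~\ref{lemma-sw} in $\omega$ yields $\partial_\omega(C^2)=-2\omega/(\varkappa\,a'(C^2))$, and after substituting and using $\kappa=C^2 a'(C^2)/(2\varkappa)$ the result simplifies to
\[
\partial_\omega\calQ(\varPhi_\omega)
=\frac{C^2}{\varkappa^3\kappa}\bigl(m^2\kappa-\omega^2\bigr).
\]
For $\kappa>0$ one therefore has $d''(\omega)>0$ exactly when $m^2\kappa-\omega^2<0$, i.e.\ $\kappa<\omega^2/m^2$, yielding stability; the reverse inequality gives $d''(\omega)<0$ and hence orbital instability by the instability half of GSS. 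Combining the two regimes recovers the single condition $\kappa<\omega^2/m^2$ stated in the theorem.

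The main technical obstacle will be verifying the structural hypotheses of GSS in the present $\delta$-function setting: (i) $C^1$-smoothness of the branch $\omega\mapsto\varPhi(\cdot,\omega)\in\calE$ (which is guaranteed precisely when $a'(C^2)\ne 0$, i.e.\ $\kappa\ne 0$, by Lemma~\ref{lemma-sw}, so the degenerate value $\kappa=0$ must either be excluded from the hypothesis of the theorem or handled by continuity); (ii) a rigorous identification of $E_\omega''(\varPhi_\omega)$ with $\bfH(\omega,\kappa)$ on the correct domain incorporating the jump condition at $x=0$; and (iii) the verification that the kernel of $\bfH(\omega,\kappa)$ equals the one-dimensional span of $\bmSigma\varPhi_\omega$ for all $\kappa\ne 0$, so that GSS's non-degeneracy requirement holds. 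All three points are purely spectral and follow from Lemma~\ref{lemma-sigma-lpm} and Lemma~\ref{lemma-sigma-h-12}; once they are assembled, the stability/instability dichotomy is an immediate consequence of the sign computation above.
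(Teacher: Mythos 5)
Your proposal follows the same route as the paper: the Grillakis--Shatah--Strauss framework, with the stability/instability dichotomy reduced to the sign of $d''(\omega)=-\partial_\omega\calQ(\varPhi_\omega)$, and your explicit computation of $\partial_\omega\bigl(\omega\norm{\phi}^2\bigr)$ agrees with the paper's \eqref{the-requirement-0}. The core of the argument is therefore correct.

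Two boundary cases needed for the full ``if and only if'' are, however, left open. First, at the critical value $\kappa=\omega^2/m^2$ (with $\kappa>0$) one has $d''(\omega)=0$, so neither the stability nor the instability half of \cite{MR901236} applies; the theorem nevertheless asserts instability there, and the paper obtains it from the Jordan-block structure of the zero eigenvalue (algebraic multiplicity four), citing \cite{MR1995870}. Your phrase ``the reverse inequality gives $d''(\omega)<0$'' silently excludes this degenerate case, so the ``only if'' direction is incomplete. Second, at $\kappa=0$ the kernel of the real-form Hessian $\bfH(\omega,0)$ is two-dimensional (Corollary after Lemma~\ref{lemma-sigma-h-12}) and the branch $\omega\mapsto C(\omega)$ need not be differentiable since $a'(C^2)=0$, so neither the simple-kernel hypothesis of GSS nor your $d''$ computation is available; you flag this but propose only to ``handle it by continuity,'' which does not work because orbital stability is not preserved under limits. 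The paper resolves $\kappa=0$ by observing that the linearization is then $\C$-linear and applying \cite[Theorem 1]{MR901236} to the complex-form operator $\tilde H_0(\omega)$, whose kernel is one-dimensional. Apart from these two boundary cases, your argument is correct and essentially identical to the paper's.
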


Let us recall that
for nonzero solitary waves $a(\abs{\phi(0,\omega)}^2)\ne 0$.

\begin{proof}
The Grillakis--Shatah--Strauss theory \cite{MR901236}
applies when
either
$\bfH\ge 0$, with $\lambda=0$ a simple eigenvalue
with the corresponding eigenvector $\bfJ\Phi$,
satisfying the assumptions of \cite[Theorem 1]{MR901236}.
(when $\kappa<0$),
or
$\bfH$ has a negative spectrum consisting of one simple
eigenvalue,
has its kernel spanned by $\bfJ\Phi$,
while the rest of its spectrum is positive and
separated away from $z=0$,
satisfying the assumptions of \cite[Theorem 2]{MR901236}.

In the case  $\kappa=0$, the operator  
$A_0(\omega)=\begin{bmatrix}
\jj\omega &1\\
-L_0(\omega)-\omega^2 &\jj\omega
\end{bmatrix}$
in the right-hand side of (\ref{lin3}) is  $\C$-linear.
Moreover, it can be represented as
$A_0(\omega)=J \tilde H_0(\omega)$, where
\[
J=\begin{bmatrix}
0 &1\\
-1& 0
\end{bmatrix},\quad  \tilde H_0(\omega)= \begin{bmatrix}
L_0(\omega)+\omega^2 &-\jj\omega\\
\jj\omega &1
\end{bmatrix}.
\]
The operator $\tilde H_0(\omega)$ has  the same spectral properties as the operator $H_0(\omega)$.
Namely, 
\[
\sigma\sb{\mathrm{ess}}\big(\tilde H_0(\omega)\big)
=[c^{-}(\omega),1]\cup[c^{+}(\omega),+\infty)
,\quad \sigma\sb{\mathrm{p}}\big(\tilde H_0(\omega)\big)
=\{0\}\cup\{\omega^2+1\},
\]
with eigenvalue $\lambda=0$ being simple.
Hence,
\cite[Theorems 1]{MR901236} applies, showing that
the solitary wave $\phi(x,\omega)e^{-\jj\omega t}$
is orbitally stable.

The instability in the case $\kappa>\omega^2/m^2$
follows from \cite[Theorem 3]{MR901236};
in the critical case, the instability follows from the
Jordan block structure of the zero eigenvalue;
see e.g. \cite{MR1995870}.
\end{proof}

The spectrum of the linearization at a solitary wave
also follows from the
arguments in \cite{kolokolov-1973} and \cite{MR901236};
we consider it next.

\begin{lemma}
[Spectrum of $\bfA(\omega,\kappa)$: basic properties]
\label{lemma-sigma-a}
\quad
\begin{enumerate}
\item
\label{lemma-sigma-a-1}
If $\lambda$ belongs to $\sigma\sb{\mathrm{p}}(\bfA(\omega,\kappa))$,
then so do $\bar\lambda$, $-\lambda$, and $-\bar\lambda$.
\item
\label{lemma-sigma-a-2}
$\sigma\sb{\mathrm{ess}}(\bfA(\omega,\kappa))
=\jj\big(
\R\setminus(-m+\abs{\omega},m-\abs{\omega})
\big)$.
\item
\label{lemma-sigma-a-3}
$\sigma\sb{\mathrm{p}}(\bfA(\omega,\kappa))
\subset\R\cup\jj\R$.
There is a pair of a positive and a negative eigenvalues
of $\bfA(\omega,\kappa)$
if and only if
$\kappa>0$ and
$|\omega|<\min\{m,\Omega_\kappa\}$,
with
\begin{eqnarray}\label{def-o-k}
\Omega_\kappa:=m\sqrt{\kappa}.
\end{eqnarray}
\item
\label{lemma-sigma-a-4}
If
$\omega\in(-m,m)$
and
$\kappa\not\in\big\{0,\,\omega^2/m^2\big\}$,
then the zero eigenvalue of $\bfA(\omega,\kappa)$ is
of geometric multiplicity $1$
and
of algebraic multiplicity $2$.
\item
\label{lemma-sigma-a-5}
If
$\omega\in(-m,m)\setminus\{0\}$ and $\kappa=\omega^2/m^2$,
then
the zero eigenvalue of $\bfA(\omega,\kappa)$ is of
geometric multiplicity $1$
and
of algebraic multiplicity $4$.
\item
\label{lemma-sigma-a-6}
If
$\omega\in(-m,m)\setminus\{0\}$ and $\kappa=0$,
then
the zero eigenvalue of $\bfA(\omega,\kappa)$ is of
geometric multiplicity $2$
and
of algebraic multiplicity  $2$.
\item
\label{lemma-sigma-a-7}
If $\omega=0$ and $\kappa=0$,
then eigenvalue $z=0$ of $\bfA(\omega,\kappa)$ is of
geometric multiplicity $2$
and
of algebraic multiplicity $4$.
\end{enumerate}

\end{lemma}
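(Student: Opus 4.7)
The plan is to exploit the Hamiltonian factorization $\bfA(\omega,\kappa)=\bmSigma\bfH(\omega,\kappa)$ together with a direct Evans-type computation using the $\delta$-support of the potential. Item (\ref{lemma-sigma-a-1}) is structural: the $\R^4$-realization of $\bfA$ has real coefficients, so $\overline{\sigma(\bfA)}=\sigma(\bfA)$; using $\bmSigma^*=-\bmSigma$, $\bmSigma^{-1}=-\bmSigma$, and $\bfH^*=\bfH$, one computes $\bfA^*=\bfH\bmSigma^*=-\bfH\bmSigma=-\bmSigma^{-1}\bfA\bmSigma$, so $\bfA$ is similar to $-\bfA^*$ and therefore $\sigma(\bfA)=-\overline{\sigma(\bfA)}$; combining gives the four-fold invariance. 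For (\ref{lemma-sigma-a-2}), the $\delta$-potentials in $L_\kappa$ and $L_0$ are relatively compact perturbations of the free operator, so by Weyl's theorem $\sigma\sb{\mathrm{ess}}(\bfA)$ coincides with that of the constant-coefficient operator obtained by replacing $L_\kappa,L_0$ by $-\p_x^2+m^2-\omega^2$. Fourier-transforming, the symbol at $\xi\in\R$ has eigenvalues $\pm\jj\omega\pm\jj\sqrt{\xi^2+m^2}$ (the pair $\pm\jj\omega$ appears because the $\R^4$-lift contains both the complex $2\times 2$ block and its conjugate), and their union over $\xi\in\R$ is $\jj(\R\setminus(-m+\abs\omega,m-\abs\omega))$.

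For (\ref{lemma-sigma-a-3}) I reduce $\bfA\Psi=\lambda\Psi$ by solving the first two rows for $(\Psi_3,\Psi_4)$ and substituting into the last two; this produces the quadratic pencil
\[
L\Psi+2\omega\lambda J\Psi+\lambda^2\Psi=0,\qquad L=\mathrm{diag}(L_\kappa,L_0),\quad J=\begin{bmatrix}0&1\\-1&0\end{bmatrix},
\]
with $\Psi=(\Psi_1,\Psi_2)^T$. Away from $x=0$ each component is a combination of $e^{-r_\pm\abs x}$ with $r_\pm^2=m^2+(\lambda\pm\jj\omega)^2$ and branch constraint $\Psi_2=\mp\jj\Psi_1$. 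Matching the two jump conditions at $x=0$ (one from the $\delta$-potential in $L_\kappa$, one from that in $L_0$) gives a $2\times 2$ linear system in the coefficients whose determinantal condition simplifies, using $\alpha=2\varkappa$, to
\[
(r_+-\varkappa(1+\kappa))(r_--\varkappa(1+\kappa))=\kappa^2\varkappa^2.
\]
When $\lambda\in\R\setminus\{0\}$ one has $r_-=\overline{r_+}$, so writing $r_+=p+\jj q$ with $p>0$ the equation becomes the circle $(p-\varkappa(1+\kappa))^2+q^2=\kappa^2\varkappa^2$; combining with $p^2-q^2=\varkappa^2+\lambda^2$ and $pq=\omega\lambda$ and eliminating $q,\lambda$ produces the cubic
\[
p^3-\varkappa(1+2\kappa)p^2+2\omega^2 p-2\omega^2\kappa\varkappa=0,\qquad\lambda^2=2(p-\varkappa)(p-\kappa\varkappa).
\]
A direct substitution shows $p=\varkappa$ solves the cubic exactly when $\omega^2=\kappa m^2$ (the threshold $\abs\omega=\Omega_\kappa$), and a short root-counting argument then yields the equivalence in (\ref{lemma-sigma-a-3}). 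The imaginary case $\lambda=\jj\mu$ is parallel, with $r_\pm=\sqrt{m^2-(\mu\pm\omega)^2}$ real, and produces eigenvalues only in the spectral gap. Eigenvalues off $\R\cup\jj\R$ are excluded by a Krein-signature count: by the corollary after Lemma~\ref{lemma-sigma-h-12}, $\bfH$ has at most one negative direction, which by the Hamiltonian Krein theorem forbids a bifurcation of imaginary eigenvalues off $\jj\R$ (such a bifurcation would require the collision of two imaginary eigenvalues of opposite Krein signature).

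For (\ref{lemma-sigma-a-4})--(\ref{lemma-sigma-a-7}), invertibility of $\bmSigma$ gives $\ker\bfA=\ker\bfH$, so the geometric multiplicity of $0$ follows from the corollary after Lemma~\ref{lemma-sigma-h-12}: it equals $2$ for $\kappa=0$ and $1$ otherwise. For algebraic multiplicities, $\mathbf{U}(1)$-invariance gives $\bfA\bfJ\Phi=0$, and differentiating \eqref{NEP} in $\omega$ yields $\bfA\partial_\omega\Phi=\bfJ\Phi$, producing a length-$2$ Jordan chain attached to $\bfJ\Phi$. This chain extends to length $4$ iff $\partial_\omega\Phi\in\mathrm{Range}(\bfA)$, which by Fredholm duality is the slope condition $d\calQ/d\omega=0$; a direct computation using $\calQ=\omega C^2/\varkappa$ and $a(C^2)=2\varkappa$ gives
\[
\frac{d\calQ}{d\omega}=\frac{C^2(\kappa m^2-\omega^2)}{\kappa\varkappa^3},
\]
vanishing exactly at $\kappa=\omega^2/m^2$ and explaining (\ref{lemma-sigma-a-5}), while (\ref{lemma-sigma-a-4}) covers the complementary regime. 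At $\kappa=0$ the second kernel direction of $\bfH$ is not generated by the $\mathbf{U}(1)$-chain; for $\omega\ne 0$ it lies outside $\mathrm{Range}(\bfA)$ and contributes only a size-$1$ block (case (\ref{lemma-sigma-a-6})), whereas at $\omega=0$ it generates a second length-$2$ block (case (\ref{lemma-sigma-a-7})). The main obstacle is the exclusion of complex eigenvalues in (\ref{lemma-sigma-a-3}): the algebraic Evans condition alone does not forbid such $\lambda$ because of the multi-valuedness of $r_\pm$, so the Krein-signature input supplied by the spectral structure of $\bfH$ is essential. A secondary subtle point is verifying in (\ref{lemma-sigma-a-5}) and (\ref{lemma-sigma-a-7}) that the Jordan chains terminate at length $4$, which requires a second-order solvability check obtained by differentiating the chain relation once more in $\omega$ and using the nondegeneracy of $d^2\calQ/d\omega^2$ at the critical point.
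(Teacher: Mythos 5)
Your treatment of Parts (\ref{lemma-sigma-a-1}) and (\ref{lemma-sigma-a-2}) matches the paper's. For Part (\ref{lemma-sigma-a-3}), you take a genuinely different route: the paper proceeds through the Kolokolov/GSS framework (reducing to the quadratic form of $H_\kappa$ on $\ker(\sigma_1 H_0\sigma_1)^\perp$, showing $\lambda^2\in\R$ via positivity of $(\sigma_1 H_0\sigma_1)^{-1}$ on that subspace, and a constrained minimization with Lagrange multipliers leading to the Vakhitov--Kolokolov condition $\p_\omega(\omega\norm{\phi}^2)>0$). Your explicit Evans-style reduction to a cubic in $p=\Re\nu_\pm$ is algebraically correct, and the Krein-signature exclusion of non-real, non-imaginary eigenvalues is a legitimate alternative to the positivity argument; this gives a more computational, less variational derivation of the threshold $\Omega_\kappa$. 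However, you leave two steps as slogans: the ``short root-counting argument'' (the cubic's sign analysis on the admissible $p$-interval, with both constraints $q^2\ge 0$ and $\lambda^2>0$ active, is not short) and the Krein count (which must be done carefully because $\bfH$ has a nontrivial kernel, and the counting theorem counts $n(\bfH)$ restricted to a certain complement).

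There is, however, a genuine gap in Part (\ref{lemma-sigma-a-6}). You write that differentiating \eqref{NEP} in $\omega$ yields $\bfA\p_\omega\Phi=\bfJ\Phi$, ``producing a length-$2$ Jordan chain attached to $\bfJ\Phi$,'' and for $\kappa=0$, $\omega\ne 0$ you only argue that the \emph{second} kernel direction contributes a size-$1$ block. This bookkeeping gives algebraic multiplicity $3$, not the stated $2$. The error is that at $\kappa=0$ one has $a'(C^2)=0$; differentiating $a(C(\omega)^2)=2\varkappa(\omega)$ gives $a'(C^2)\,2C\,C'(\omega)=-2\omega/\varkappa$, whose left side vanishes while the right side does not when $\omega\ne 0$. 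So $C$, hence $\phi(\cdot,\omega)$, is not differentiable in $\omega$, and $\p_\omega\Phi$ does not exist as an $L^2$ chain vector. Equivalently, at $\kappa=0$, $\omega\ne 0$, the Fredholm solvability condition fails: the generator $\phi\e_3-\omega\phi\e_4$ of $\ker\tilde\bfH$ is not orthogonal to $\ker(\tilde\bfH\tilde\bmSigma)\ni -\omega\phi\e_3+\phi\e_4$, since the pairing is $-2\omega\norm{\phi}^2\ne 0$. Thus \emph{both} kernel vectors are chain-terminal, and the algebraic multiplicity is exactly $2$, as the paper computes directly without invoking $\p_\omega\Phi$. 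You should flag that the $\mathbf{U}(1)$-chain collapses to length $1$ precisely at $\kappa=0$. A secondary issue: in Parts (\ref{lemma-sigma-a-5}) and (\ref{lemma-sigma-a-7}) you claim chain termination at length $4$ by ``nondegeneracy of $d^2\calQ/d\omega^2$,'' but that quantity is not computed and the parameter dependence of $\kappa(\omega)$ makes this nontrivial; the paper avoids this entirely by exploiting strict positivity of $\tilde\bfH$ on the relevant invariant subspace $\Span\{\e_3,\e_4\}\otimes L^2$ to rule out the required orthogonality condition $\langle\Xi,\tilde\bfH\Xi\rangle=0$.
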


\begin{proof}
We follow
the arguments of Kolokolov~\cite{kolokolov-1973}
and
Grillakis--Shatah--Strauss~\cite{MR901236}.
For Part~\itref{lemma-sigma-a-1},
one notices that
$\bfA(\omega,\kappa)$ has real coefficients
(hence $\bar\lambda$ is also an eigenvalue)
and then that
$\bfA(\omega,\kappa)^*=-\bfH(\omega,\kappa)\bmSigma$,
which is conjugate to $-\bmSigma\bfH(\omega,\kappa)$
(hence $-\bar\lambda$ is also an eigenvalue).
Part~\itref{lemma-sigma-a-2} follows
by fixing $\lambda\in\jj\R$ with
$\abs{\lambda}\ge m-\abs{\omega}$
and considering the Weyl sequences
supported away from $x=0$.


To prove Part~\itref{lemma-sigma-a-3},
it is convenient to write $\bfA(\omega,\kappa)$
from \eqref{lin4}
in the form similar to the nonlinear Schr\"odinger
theory \cite{kolokolov-1973}.
We consider the conjugated versions
of $\bmSigma$ and $\bfH(\omega,\kappa)$,
with
\[
\tilde\bfH(\omega,\kappa)=\bfG_2\bfG_1\bfH(\omega,\kappa)\bfG_1^{-1}\bfG_2^{-1}
\]
from \eqref{def-tilde-h}
and with
\begin{eqnarray}\label{def-tilde-sigma}
\tilde\bmSigma=
\bfG_2\bfG_1\bmSigma\bfG_1^{-1}\bfG_2^{-1}
=-\jj\begin{bmatrix}
0&\sigma_1\\
\sigma_1&0
\end{bmatrix}.
\end{eqnarray}
Now we are in the framework
of A. Kolokolov~\cite{kolokolov-1973}.
Consider  the eigenvalue problem for $\tilde\bmSigma\tilde\bfH(\omega,\kappa)$:
\begin{eqnarray}\label{def-a-tilde}
\tilde\bmSigma\tilde\bfH(\omega,\kappa)\psi
=
-\jj\begin{bmatrix}0&\sigma_1\\\sigma_1&0\end{bmatrix}
\begin{bmatrix}H_\kappa(\omega)&0\\0&H_0(\omega)\end{bmatrix}
\begin{bmatrix}u\\v\end{bmatrix}
=\lambda
\begin{bmatrix}u\\v\end{bmatrix}, \qquad
\psi=
\begin{bmatrix}u\\v\end{bmatrix}\in L^2(\R,\C^4).
\end{eqnarray}
First, we notice that if $\kappa\le 0$,
then $\tilde\bfH(\omega,\kappa)$
is nonnegative and selfadjoint,
hence one can extract the square root
(also nonnegative and selfadjoint);
therefore,
\[
\sigma\sb{\mathrm{d}}(\tilde\bmSigma\tilde\bfH)\setminus\{0\}
=
\sigma\sb{\mathrm{d}}\big(\tilde\bfH^{1/2}\tilde\bmSigma\tilde\bfH^{1/2}\big)\setminus\{0\}
\subset\jj\R
\]
since $\tilde\bfH^{1/2}\tilde\bmSigma\tilde\bfH^{1/2}$
is antiselfadjoint.

From now on, we assume that $\kappa>0$.
Eliminating  $v$, we get
\[
-\sigma_1H_0(\omega)\sigma_1H_\kappa(\omega) u=\lambda^2 u, \qquad
u\in L^2(\R,\C^2).
\]
For $\lambda\ne 0$,
one can see that $u$ is orthogonal to
\begin{eqnarray}\label{kerlen-l0-sigma}
\ker(\sigma_1H_0(\omega)\sigma_1)=
\ker
\left(
\begin{bmatrix}1&\omega\\\omega&L_0+\omega^2\end{bmatrix}
\right)
=
\begin{bmatrix}-\omega\phi\\\phi\end{bmatrix}
\end{eqnarray}
(cf. \eqref{NEP}),
hence we can write
\[
H_\kappa(\omega) u=-\lambda^2(\sigma_1H_0(\omega)\sigma_1)^{-1} u
=-\lambda^2\sigma_1H_0(\omega)^{-1}\sigma_1 u.
\]
Coupling this relation with $u$ and taking into account that
$\langle u,(\sigma_1H_0(\omega)\sigma_1)^{-1} u\rangle>0$, 
we see that $\lambda^2\in\R$.
To find whether $-\lambda^2$ can be negative,
one considers the minimization problem
\begin{eqnarray}\label{x3}
\mu=
\inf\left\{
\langle u,H_\kappa(\omega) u \rangle
:\ \langle u,u\rangle=1,
\quad
u\in \big(\ker(\sigma_1H_0(\omega)\sigma_1)\big)^\perp
=
\begin{bmatrix}-\omega\phi\\\phi\end{bmatrix}
\sp\perp
\right\},
\end{eqnarray}
which implies that $u$ satisfies
\begin{eqnarray}\label{x33}
H_\kappa(\omega) u=\mu u
+
\nu
\begin{bmatrix}-\omega\phi\\\phi\end{bmatrix},
\end{eqnarray}
with $\mu,\,\nu\in\R$ the Lagrange multipliers.
We note that if $\mu\le 0$,
then $\nu\ne 0$, or else
one would have $\mu=\lambda^{-}_\kappa$ (the only negative
eigenvalue of $H_\kappa(\omega)$), which is not possible
since $u^{-}(x)$ from \eqref{h-psi}
corresponding to 
eigenvalue $\lambda^{-}_\kappa$ is not orthogonal
to $\ker(\sigma_1H_0(\omega)\sigma_1)$:
one has
\[
(u^{-})^*\begin{bmatrix}-\omega\phi\\\phi\end{bmatrix}
=
\begin{bmatrix}1-\lambda^{-}_\kappa\\-\omega\end{bmatrix}^*
\begin{bmatrix}-\omega\\1\end{bmatrix}
e^{-\abs{x}\sqrt{\varkappa^2-\Lambda}}
e^{-\abs{x}\varkappa}
=
-(2-\lambda^{-}_\kappa)\omega
e^{-\abs{x}\sqrt{\varkappa^2-\Lambda}}
e^{-\abs{x}\varkappa},
\]
where $2-\lambda^{-}_\kappa$ is strictly positive.
So, $\nu\ne 0$ (or else $\mu$ would have to be positive);
then one concludes from \eqref{x3} that $\mu>\lambda^{-}_\kappa$.
We rewrite \eqref{x33} as
\[
 (H_\kappa(\omega)-\mu )u
=
\nu
\begin{bmatrix}-\omega\phi\\\phi\end{bmatrix},
\qquad
u
=
\nu
(H_\kappa(\omega)-\mu )^{-1}
\begin{bmatrix}-\omega\phi\\\phi\end{bmatrix}.
\]
The sign of $\mu$
could be found from the condition that
$u$ is orthogonal to $\ker(\sigma_1H_0(\omega)\sigma_1)$:
\[
\left\langle
\begin{bmatrix}-\omega\phi\\\phi\end{bmatrix},
(H_\kappa(\omega)-z )^{-1}
\begin{bmatrix}-\omega\phi\\\phi\end{bmatrix}
\right\rangle=0.
\]
We consider
\[
h(z)=
\left\langle
\begin{bmatrix}-\omega\phi\\\phi\end{bmatrix},
(H_\kappa(\omega)-z )^{-1}
\begin{bmatrix}-\omega\phi\\\phi\end{bmatrix}
\right\rangle,
\qquad
z\in(\lambda^{-}_\kappa,c^{-})\subset\rho(H_\kappa(\omega)).
\]
Since $h(z)$ is monotonically increasing on  $\rho(H_\kappa(\omega))$,
the sign of $\mu$ is opposite to the sign of $h(0)$, 
which is given by
\begin{eqnarray}\label{h-zero}
h(0)=
\left\langle
\begin{bmatrix}-\omega\phi\\\phi\end{bmatrix},
H_\kappa(\omega)^{-1}
\begin{bmatrix}-\omega\phi\\\phi\end{bmatrix}
\right\rangle
=
\left\langle
\begin{bmatrix}-\omega\phi\\\phi\end{bmatrix},
\begin{bmatrix}-\p_\omega\phi
\\\omega\p_\omega\phi+\phi\end{bmatrix}
\right\rangle
=\p_\omega
\left(\omega\norm{\phi}^2\right)
.
\end{eqnarray}
Above, we used the relation
\begin{eqnarray}\label{theta-phi}
H_\kappa(\omega)\begin{bmatrix}
-\p_\omega\phi\\\omega\p_\omega\phi+\phi
\end{bmatrix}
=
\begin{bmatrix}L_\kappa+\omega^2&\omega\\\omega&1\end{bmatrix}
\begin{bmatrix}
-\p_\omega\phi\\\omega\p_\omega\phi+\phi
\end{bmatrix}
=
\begin{bmatrix}-\omega\phi\\\phi\end{bmatrix}
\end{eqnarray}
which in turn follows from
taking the $\omega$-derivative of
\eqref{NEP}
(after we substitute $\phi(x,\omega)$),
which yields
\begin{eqnarray}\label{relation-1}
L_\kappa\p_\omega\phi=2\omega\phi.
\end{eqnarray}
We conclude from \eqref{h-zero}
that there is $\lambda^2>0$ 
(hence,
there is a pair of a positive and a negative eigenvalues)
if and only if
$h(0)=\p_\omega(\omega \norm{\phi}^2)>0$.

Let us prove
Parts~\itref{lemma-sigma-a-4}
and~\itref{lemma-sigma-a-5}.
Given $\omega\in(-m,m)$,
let us compute the
geometric multiplicity of $\lambda=0$.
We consider $\tilde \bfA=\tilde\bmSigma\tilde\bfH(\omega,\kappa)$,
with $\tilde\bmSigma$ from
\eqref{def-tilde-sigma}
and $\tilde\bfH(\omega,\kappa)$ from \eqref{def-tilde-h}.
Since $\kappa\ne 0$,
by Lemma~\ref{lemma-sigma-h-12}
the geometric multiplicity
of $\lambda=0$ equals $1$,
with
\[
\ker(\tilde\bfH(\omega,\kappa))
=\Span\left\{
\phi(x)\bm{e}_3-\omega\phi(x)\bm{e}_4
\right\},
\]
with $\{\bm{e}_i\}_{1\le\jj\le 4}$ the standard basis in $\C^4$.

Now let us compute the
algebraic multiplicity of $\lambda=0$.
By
\eqref{kerlen-l0-sigma}
and
\eqref{theta-phi}
we have
\[
\tilde\bmSigma\tilde\bfH(\omega,\kappa)
\begin{bmatrix}
0\\0\\\phi\\-\omega\phi
\end{bmatrix}
=-\jj\begin{bmatrix}0&\sigma_1\\\sigma_1&0\end{bmatrix}
\begin{bmatrix}H_\kappa(\omega)&0\\0&H_0(\omega)\end{bmatrix}
\begin{bmatrix}
0\\0\\\phi\\-\omega\phi
\end{bmatrix}
=0,
\]
\begin{eqnarray}\label{asdf}
\tilde\bmSigma\tilde\bfH(\omega,\kappa)
\begin{bmatrix}
-\p_\omega\phi\\
\omega\p_\omega\phi+\phi
\\0\\0
\end{bmatrix}
=
-\jj\begin{bmatrix}0&\sigma_1\\\sigma_1&0\end{bmatrix}
\begin{bmatrix}H_\kappa(\omega)&0\\0&H_0(\omega)\end{bmatrix}
\begin{bmatrix}
-\p_\omega\phi\\
\omega\p_\omega\phi+\phi
\\0\\0
\end{bmatrix}
=
-\jj
\begin{bmatrix}
0\\0\\\phi\\-\omega\phi
\end{bmatrix};
\end{eqnarray}
thus $\lambda=0$ is an eigenvalue
of $\tilde\bmSigma\tilde\bfH(\omega,\kappa)$
of multiplicity at least two.
To be able to extend this Jordan chain,
solving
\begin{eqnarray}\label{wanted-xi}
\tilde\bmSigma\tilde\bfH(\omega,\kappa)
\Xi
=
\begin{bmatrix}
-\p_\omega\phi\\
\omega\p_\omega\phi+\phi
\\0\\0
\end{bmatrix},
\qquad
\Xi\in\C^4,
\end{eqnarray}
we need to make sure that
the right-hand side is orthogonal to the kernel of
the adjoint of the operator in the left-hand side,
\[
\ker\big(\tilde\bfH(\omega,\kappa)\tilde\bmSigma\big)
=
\Span
\left\{
\begin{bmatrix}
-\omega\phi\\\phi\\0\\0
\end{bmatrix}
\right\};
\]
thus, the condition
to have a Jordan block of a larger size is
\begin{eqnarray}\label{add}
0=
\left\langle
\begin{bmatrix}
-\omega\phi\\\phi\\0\\0
\end{bmatrix}
,
\ \begin{bmatrix}
-\p_\omega\phi\\\omega\p_\omega\phi+\phi\\0\\0
\end{bmatrix}
\right\rangle
=\p_\omega
\left(
\omega\norm{\phi}^2\right).
\end{eqnarray}
Let us compute explicitly the right-hand side in \eqref{add}.
Since
$\norm{\phi(\omega)}^2=\frac{C(\omega)^2}{\varkappa}$
(see Lemma~\ref{lemma-sw}),
one computes:
\begin{eqnarray}\label{the-requirement-0}
\frac{d}{d\omega}(\omega\norm{\phi(\omega)}^2)
=
\frac{C^2}{\varkappa}
+
\omega
\Big(
\frac{2C}{\varkappa}\frac{d C}{d\omega}
+\frac{\omega C^2}{\varkappa^3}
\Big)
=
\frac{1}{\varkappa^3}
\Big(
m^2 C^2-\frac{\omega^2 a(C^2)}{a'(C^2)}
\Big)
=
\frac{C^2}{\varkappa^3}
\Big(
m^2-\frac{\omega^2}{\kappa}
\Big),
\end{eqnarray}
where $C=C(\omega)$; to express $d C/d\omega$, we used \eqref{o-h}.
Thus, if $\kappa\ne 0$,
$\p\big(\omega\norm{\phi_\omega}^2\big)=0$
if and only if $\kappa=\omega^2/m^2$
(we will consider the case $\kappa=0$ separately).

If
$\kappa\ne \omega^2/m^2$,
$\kappa\ne 0$,
since the condition \eqref{add} is not satisfied,
equation \eqref{wanted-xi} has no $L^2$-solutions
hence the algebraic multiplicity is exactly two.
If
$\kappa=\omega^2/m^2>0$,
then equation \eqref{wanted-xi}
has an $L^2$-solution;
the algebraic multiplicity jumps by at least one.
This means that
an eigenvalue arrives at $z=0$;
by the symmetry of $\sigma(\bfA(\omega,\kappa))$
with respect to $\R$ and $\jj\R$,
there is also the opposite sign eigenvalue arriving at
$z=0$, thus the algebraic multiplicity of $z=0$
jumps by two.
Standard considerations (see e.g. \cite{MR1995870})
show that
for $\omega\ne\pm\Omega_\kappa$
the algebraic multiplicity
of zero eigenvalue
can not be more than four.
Indeed, let us show that
the equation
\[
(\tilde\bmSigma\tilde\bfH)^4\Gamma
=\phi(x)\e_3-\omega\phi(x)\e_4,
\qquad
\Gamma\in L^2(\R,\C^4),
\]
has no solutions.
If the algebraic multiplicity of $\lambda=0$ is at least four
(hence $\kappa=\omega^2/m^2$)
so that there are
$\Xi,\,\Theta\in L^2(\R,\C^4)$
such that
\begin{eqnarray}\label{as}
(\tilde\bmSigma\tilde\bfH)\Xi
=
-\p_\omega\phi\e_1+(\omega\p_\omega\phi+\phi)\e_2,
\qquad
(\tilde\bmSigma\tilde\bfH)\Theta=\Xi,
\end{eqnarray}
where, as one can readily see from the block form of
$\tilde\bmSigma\tilde\bfH$
(see \eqref{def-a-tilde}),
one can decompose
$\Xi(x)=\xi_3(x)\e_3+\xi_4(x)\e_4$,
then, trying to solve
\[
(\tilde\bmSigma\tilde\bfH)^2
\Gamma
=
\Xi,
\qquad
\Gamma
\in L^2(\R,\C^4),
\]
we need to make sure that
$\Xi$
is orthogonal to the kernel of
$((\tilde\bmSigma\tilde\bfH)^2)^*
=(\tilde\bfH\tilde\bmSigma)^2$
which contains in particular
$\tilde\bmSigma(
(
-\p_\omega\phi\e_1
+\omega\p_\omega\phi+\phi)\e_2
)
=\tilde\bmSigma(\tilde\bmSigma\tilde\bfH)\Xi
=-\tilde\bfH\Xi
$.
We arrive at the following necessary condition:
\begin{eqnarray}\label{ns1}
\langle\Xi,
\tilde\bfH(\omega,\kappa)\Xi\rangle
=0.
\end{eqnarray}
Taking into account that
$\tilde\bfH(\omega,\kappa)
=
\begin{bmatrix}
H_\kappa(\omega)&0
\\
0&H_0(\omega)
\end{bmatrix}
$
is semi-positive-definite on
vectors of the form
$\xi_3(x)\e_3+\xi_4(x)\e_4$,
with the one-dimensional kernel
(this follows from Lemma~\ref{lemma-sigma-h-12}),
while by \eqref{as} the function $\Xi$ is not in this kernel,
we conclude that the left-hand side is strictly positive,
hence the condition
\eqref{ns1} can not be satisfied.

For Part~\itref{lemma-sigma-a-6},
we consider the case
$\omega\ne 0$,
$\kappa=0$.
By Lemma~\ref{lemma-sigma-h-12},
the geometric multiplicity
of $\lambda=0$ equals $2$,
with
\[
\ker\big(\tilde\bfH(\omega,0)\big)
=\Span\left\{
\phi(x)\bm{e}_1-\omega\phi(x)\bm{e}_2,\ 
\phi(x)\bm{e}_3-\omega\phi(x)\bm{e}_4
\right\}.
\]
To have a Jordan block,
we would need to solve
\begin{eqnarray}\label{ns}
\tilde\bmSigma\tilde\bfH(\omega,0)
\phi(x)\bm{e}_4
=
-\jj\begin{bmatrix}
0&\sigma_1\\\sigma_1&0
\end{bmatrix}
\begin{bmatrix}
H_0(\omega)&0\\0&H_0(\omega)
\end{bmatrix}
\Xi
=
-\jj
\phi(x)\bm{e}_1.
\end{eqnarray}
Since $\omega\ne 0$,
the right-hand side of \eqref{ns} is not orthogonal to
\[
\ker\big(
(\tilde\bmSigma\tilde\bfH(\omega,0))^*
\big)
=
\ker\big(
\tilde\bfH(\omega,0)\tilde\bmSigma
\big)
\ni\begin{bmatrix}-\omega\phi(x)\\\phi(x)\\0\\0\end{bmatrix}
,
\]
hence \eqref{ns} has no solutions.

Finally, for Part~\itref{lemma-sigma-a-7},
we consider the case
$\omega=0$,
$\kappa=0$.
In this case, the geometric multiplicity
of $\lambda=0$ equals $2$,
with
\[
\ker\big(\tilde\bfH(0,0)\big)
=\Span\left\{
\phi(x)\bm{e}_1,\ \phi(x)\bm{e}_3
\right\}.
\]
The Jordan block
corresponding to $\phi(x)\bm{e}_1$
is of size at least two since
\[
\tilde\bmSigma\tilde\bfH(0,0)
\phi(x)\bm{e}_4
=
-\jj\begin{bmatrix}
0&\sigma_1\\\sigma_1&0
\end{bmatrix}
\begin{bmatrix}
H_0(0)&0\\0&H_0(0)
\end{bmatrix}
\phi(x)\bm{e}_4
=
-\jj
\phi(x)\bm{e}_1.
\]
Now we notice that $\phi(x)\bm{e}_4$
is not orthogonal to
$\ker\big(\tilde\bfH(0,0)\tilde\bmSigma\big)$
which contains
$\phi(x)\bm{e}_4$;
therefore,
the corresponding Jordan block is of size exactly two.
Similarly, there is a Jordan block of size exactly two
corresponding to
$\phi(x)\bm{e}_3$.

This completes the proof of Lemma~\ref{lemma-sigma-a}.
\end{proof}

\begin{remark}
By Lemma~\ref{lemma-sigma-a}~\itref{lemma-sigma-a-3},
there is a positive eigenvalue of $\bfA(\omega,\kappa)$
if and only if
$\omega^2<\min\{m^2,m^2\kappa\}$.
We see from \eqref{the-requirement-0}
that in this case
one has $\p_\omega(\omega\norm{\phi(\omega)}^2)>0$.
Since the charge of the Klein--Gordon field
is given by
$\calQ(\phi(\omega))=\omega\norm{\phi(\omega)}^2$
(see \eqref{Q}),
the above is in agreement
with the Kolokolov stability condition
\begin{eqnarray}\label{K-C}
\p_\omega \calQ(\phi(\omega))<0
\end{eqnarray}
derived in \cite{kolokolov-1973}
in the context of the nonlinear Schr\"odinger equation.
\end{remark}

\section{The spectrum of the linearization operator}
\label{section-4}

Now we are going to perform a complete analysis
of the spectrum of the linearization operator.
For  convenience,
we consider the linearization operator $\bfA(\omega,\kappa)$
from \eqref{lin4}
using $\kappa>0$ as a parameter.
Recall that
\begin{eqnarray}\label{def-a}
\bfA(\omega,\kappa)
=
\begin{bmatrix}
0&-\omega&1&0
\\
\omega&0&0&1
\\
\p_x^2-m^2+(1+2\kappa)\alpha\delta(x)&0&0&-\omega
\\
0&\p_x^2-m^2+\alpha\delta(x)&\omega&0
\end{bmatrix},
\qquad
\alpha=2\sqrt{m^2-\omega^2}.
\end{eqnarray}

For $x\ne 0$,
substituting $\Psi(x)=\w e^{-\nu |x|}$
with some $\nu\in\C$, $\Re\nu\ge  0$,
into the equation $(\bfA(\omega,\kappa)-\lambda )\Psi=0$, we get
\begin{equation}
\begin{bmatrix}
   -\lambda    &-\omega      &  1     &  0\\
   \omega     &     -\lambda     &  0     &  1\\
  \nu^2-m^2  &       0       & -\lambda  &-\omega\\
     0      &     \nu^2-m^2  & \omega   & -\lambda
\end{bmatrix}
\w=0,
\qquad
\nu\in\C,\quad\w\in\C^4,\quad
\w\ne 0,
\end{equation}
which,
via the Schur complement idea,
is equivalent to
\begin{equation}\label{s-v}
\begin{bmatrix}
\begin{array}{cc}-\lambda&-\omega\\\omega&-\lambda\end{array}
&
\begin{array}{cc}1&0\\0&1\end{array}
\\
S(\omega,\lambda,\nu)
&
\begin{array}{cc}0&0\\0&0\end{array}
\end{bmatrix}
\w=0,
\qquad
\w\in\C^4,\quad
\w\ne 0,
\end{equation}
with
$S(\omega,\lambda,\nu)\in\mathop{\mathrm{End}}(\C^2)$
the Schur complement
of the top right block $I_2$:
\begin{eqnarray}\label{def-s}
S(\omega,\lambda,\nu)=
\begin{bmatrix}\nu^2-m^2&0\\0&\nu^2-m^2\end{bmatrix}
-
\begin{bmatrix}-\lambda&-\omega\\\omega&-\lambda\end{bmatrix}^2
=
\begin{bmatrix}\nu^2-m^2+\omega^2-\lambda^2&-2\lambda\omega
\\2\lambda\omega&\nu^2-m^2+\omega^2-\lambda^2\end{bmatrix}.
\end{eqnarray}
The condition to have nonzero solution $\w\in\C^4$
to \eqref{s-v}
is equivalent to
\[
\det S(\omega,\lambda,\nu)
=
(m^2-\omega^2+\lambda^2-\nu^2)^2+4\lambda^2\omega^2=0.
\]
This gives
\[
m^2-\omega^2+\lambda^2-\nu^2=-2\jj\lambda\omega,
\qquad
m^2-\omega^2+\lambda^2-\nu^2=2\jj\lambda\omega,
\]
\[
m^2-(\omega-\jj\lambda)^2=\nu^2,
\qquad
m^2-(\omega+\jj\lambda)^2=\nu^2,
\]
allowing one to express $\nu\in\C$
in terms of $\omega$ and $\lambda$:
\[
\nu=\sqrt{m^2-(\omega\pm \jj\lambda)^2}.
\]
We choose the cuts in the complex plane $\lambda$ from the branching points to
infinity:
\begin{eqnarray*}
\calC_+:=(-\jj\infty,-\jj (m-\omega)]\cup[\jj (m+\omega,\jj \infty),
\\
\calC_-:=(-\jj\infty,-\jj (m+\omega)]\cup[\jj (m-\omega,\jj \infty),
\end{eqnarray*}
defining
\begin{eqnarray}\label{def-nu}
\nu_\pm(\omega,\lambda)=\sqrt{m^2-(\omega\pm \jj\lambda)^2},
\end{eqnarray}
with
\begin{equation}\label{re}
\Re\nu_\pm(\omega,\lambda)>0,
\qquad
\lambda\in\C\setminus \calC_\pm.
\end{equation}

The zero eigenvalue of the Schur complement $S$
from \eqref{def-s}
corresponds to
two eigenvectors
$u_\pm\in\C^2$,
depending on the choice $\nu=\nu_\pm$;
these eigenvectors
are given by
\[
\begin{bmatrix}
2\lambda\omega
\\
-m^2+\omega^2+\nu_\pm^2-\lambda^2
\end{bmatrix}
=
\begin{bmatrix}
2\lambda\omega
\\
-m^2+\omega^2+(m^2-\omega^2+\lambda^2\mp 2\jj\lambda\omega)-\lambda^2
\end{bmatrix}
=
2\lambda\omega
\begin{bmatrix}1\\\mp \jj\end{bmatrix},
\]
so we can use
$
u_\pm
=\begin{bmatrix}1\\\mp \jj\end{bmatrix}$.
By \eqref{s-v},
the corresponding vector
from the null space of
$\bfA(\omega,\kappa)-\lambda I$
is then
$\w_\pm=\begin{bmatrix}u_\pm\\v_\pm\end{bmatrix}\in\C^4$,
with
$
v_\pm
=
-
\begin{bmatrix}
-\lambda&-\omega\\\omega&-\lambda\end{bmatrix}u_\pm
=
\begin{bmatrix}
\lambda\mp\jj\omega
\\
-\omega\mp\jj\lambda
\end{bmatrix}\in\C^2.
$
Thus, one has
\[
\w_\pm=
\begin{bmatrix}
1
\\
\pm\jj
\\
\lambda\mp\jj\omega
\\
\omega\pm\jj\lambda
\end{bmatrix}
\in\C^4.
\]
Therefore, an eigenfunction
corresponding to the eigenvalue $\lambda\in\C$
is of the form
\begin{eqnarray}\label{eigenfunction}
\Psi(x,\omega,\lambda)
=
A\w_{+}e^{-\nu_{+}\abs{x}}
+
B\w_{-}e^{-\nu_{-}\abs{x}}
=
A\begin{bmatrix}
1
\\
\jj
\\
\lambda-\jj\omega
\\
\omega+\jj\lambda
\end{bmatrix}
e^{-\nu_+\abs{x}}
+
B\begin{bmatrix}
1
\\
-\jj
\\
\lambda+\jj\omega
\\
\omega-\jj\lambda
\end{bmatrix}
e^{-\nu_{-}\abs{x}},
\end{eqnarray}
with $A,\,B\in\C$ not simultaneously zeros and $\Re\nu_{\pm}>0$
(when the corresponding coefficient is nonzero).
The values of $A$ and $B$ are obtained from the jump
conditions at $x=0$:
substituting $\Psi$ into
$(\bfA(\omega,\kappa)-\lambda )\Psi=0$
and collecting the terms with $\delta$-function
gives:
\begin{eqnarray}\label{a-vs-b}
\begin{cases}
(-2\nu_{+}+\alpha(1+2\kappa))A
+(-2\nu_{-}+\alpha(1+2\kappa))B=0,
\\
(-2\nu_{+}+\alpha)\jj A
+(-2\nu_{-}+\alpha)(-\jj B)=0.
\end{cases}
\end{eqnarray}
The condition to have
$A,\,B\in\C$ not simultaneously zeros,
\[
\det
\begin{bmatrix}
-2\nu_{+}+\alpha(1+2\kappa)
&
-2\nu_{-}+\alpha(1+2\kappa)
\\
-2\nu_{+}+\alpha
&
2\nu_{-}-\alpha
\end{bmatrix}
=0,
\]
takes the form
$
-8\nu_{+}\nu_{-}
+
4(\nu_{+}+\nu_{-})\alpha(1+\kappa)
-2\alpha^2(1+2\kappa)
=0,
$
which we rewrite as
\begin{eqnarray}\label{deter-nu}
D_{\omega,\kappa}(\lambda)=0
\end{eqnarray}
with
\begin{eqnarray}\label{def-d-lambda}
D_{\omega,\kappa}(\lambda)=
\alpha^2(1+\kappa)^2
-
2(\nu_{+}+\nu_{-})\alpha(1+\kappa)
+4\nu_{+}\nu_{-}
-\alpha^2\kappa^2,
\end{eqnarray}
with
$\alpha=2\sqrt{m^2-\omega^2}$
from \eqref{def-a-beta}
and $\nu\sb\pm(\omega,\lambda)=\sqrt{m^2-(\omega\pm\jj\lambda)^2}$
from \eqref{def-nu}.
The function $D_{\omega,\kappa}(\lambda)$ is analytic
in $\lambda$
in $\C\setminus (\calC_-\cup\calC_+)$.
Since there are two possible
values for each of the square roots in
the definition \eqref{def-nu} of $\nu_\pm$,
$D_{\omega,\kappa}(\lambda)$
can be continuied analytically
through the cuts $\calC_-$ and $\calC_+$
to an analytic function
on the four-sheet cover of $\C$,
which we also denote by $D_{\omega,\kappa}(\lambda)$.
We call
the sheet defined by conditions
(\ref{re}) the \emph{physical sheet} of $D_{\omega,\kappa}(\lambda)$.
\subsection{Embedded eigenvalues and  virtual levels}
We start by studying embedded eigenvalues and virtual levels of the operator $\bfA(\omega,\kappa)$.
Before formulating our results,
let us mention that
a \emph{virtual level}
(also known as a \emph{threshold resonance})
can be defined as a limit point of an eigenvalue family
which corresponds to values of a perturbation parameter in an interval
when this limit point no longer corresponds to a square-integrable
eigenfunction.
The virtual levels usually occur
at thresholds of the essential spectrum
(the endpoints of the essential spectrum or
the points where the continuous spectrum
changes its multiplicity).
For more on the phenomenon of virtual levels,
see e.g. \cite{MR544248,MR1841744,MR2598115,gesztesy2020absence,erdogan2019dispersive}.

\begin{lemma}[Embedded eigenvalues and virtual levels of
$\bfA(\omega,\kappa)$]

\label{lemma-vl}
\quad
\begin{enumerate}
\item
\label{lemma-vl-1}
There are  embedded eigenvalues $\lambda=\pm 2\omega\jj$ if and only if  $\kappa=0$ and $|\omega|\ge m/3$.
\item
\label{lemma-vl-2}
For any $\kappa\in\R$ and  $\omega\in (-m,m)\setminus 0$ there are no virtual levels
at the embedded thresholds $\lambda=\pm\jj(m+\abs{\omega})$.
\item
\label{lemma-vl-3}
There are virtual levels at $\lambda=\pm\jj(m-\abs{\omega})$
if and only if $\kappa\in\big[-\frac{1}{2},\frac{1}{\sqrt 2}\big)$
and $\omega=\pm\calT_\kappa$,
where
\begin{eqnarray}\label{def-omega-omega}
\calT_\kappa=m\frac{(1+2\kappa)^2}{3+4\kappa};
\end{eqnarray}
in particular,  $\lambda=\pm\jj m$ are virtual levels if and only if $\kappa=-1/2$ and $\omega=0$.
By Part~\ref{lemma-vl-1},
these are genuine virtual levels
(with non-$L^2$ eigenfunction)
if and only if $\kappa\ne 0$.
\end{enumerate}
\end{lemma}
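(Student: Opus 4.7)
The plan is to analyze the jump conditions \eqref{a-vs-b} (equivalently the scalar condition $D_{\omega,\kappa}(\lambda)=0$ from \eqref{def-d-lambda}) applied to the ansatz \eqref{eigenfunction}, branching on the nature of $\nu_\pm$. For $\lambda=\jj\mu$ with $\mu\in\R$, each $\nu_\pm=\sqrt{m^2-(\omega\mp\mu)^2}$ is either real positive (decaying), zero (bounded constant at threshold), or purely imaginary (oscillatory); an $L^2$-eigenfunction requires that only decaying branches appear, while a virtual level is a bounded non-$L^2$ solution obtained when a vanishing $\nu$ leaves a constant contribution. By Lemma~\ref{lemma-sigma-a}\itref{lemma-sigma-a-1} and the symmetry $\omega\mapsto-\omega,\ \lambda\mapsto-\lambda$, it suffices to treat $\omega>0$ and $\mu>0$.

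For Part~\itref{lemma-vl-1}, on the embedded interval $\mu\in[m-\omega,m+\omega)$ exactly one of $\nu_\pm$ is real positive and the other purely imaginary, so the $L^2$ requirement forces the oscillatory coefficient to vanish. Setting $B=0$ in \eqref{a-vs-b} leaves the two scalar equations $2\nu_+=\alpha(1+2\kappa)$ and $2\nu_+=\alpha$, which immediately give $\kappa=0$ and $\nu_+=\varkappa$. The relation $\nu_+^2=\varkappa^2$ simplifies to $(\omega-\mu)^2=\omega^2$, hence $\mu=2\omega$, and the embedded bound $\mu\ge m-\omega$ reads $\omega\ge m/3$. The symmetric case $A=0$ produces $\mu=-2\omega$, and together with $\omega<0$ this gives $\lambda=\pm 2\omega\jj$.

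For Part~\itref{lemma-vl-2}, at $\lambda=\jj(m+\omega)$ one has $\nu_+=0$ while $\nu_-$ is purely imaginary; the expression \eqref{def-d-lambda} factors as $\alpha[\alpha(1+2\kappa)-2\nu_-(1+\kappa)]$, so $D=0$ becomes $\alpha(1+2\kappa)=2\nu_-(1+\kappa)$ — a real number equal to a purely imaginary one, forcing both sides to vanish separately; the resulting system $\kappa=-1/2$ and $\kappa=-1$ (using $\nu_-\ne 0$) is contradictory. For Part~\itref{lemma-vl-3}, at $\lambda=\jj(m-\omega)$ one has $\nu_-=0$ and $\nu_+=2\sqrt{\omega(m-\omega)}>0$, so $D=0$ becomes the real equation $\alpha(1+2\kappa)=2\nu_+(1+\kappa)$. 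Substituting $\alpha=2\sqrt{(m-\omega)(m+\omega)}$ and $\nu_+=2\sqrt{\omega(m-\omega)}$ reduces this to $\sqrt{m+\omega}\,(1+2\kappa)=2\sqrt{\omega}\,(1+\kappa)$. Squaring yields $\omega(3+4\kappa)=m(1+2\kappa)^2$, i.e., $\omega=\calT_\kappa$ as in \eqref{def-omega-omega}; the sign compatibility of the pre-squared equation requires $1+2\kappa$ and $1+\kappa$ to agree in sign (so $\kappa\ge -1/2$), while $\omega<m$ forces $\kappa^2<1/2$, yielding $\kappa\in[-1/2,1/\sqrt{2})$. The endpoint $\kappa=-1/2$ is the limiting case $\omega=0$, $\lambda=\pm\jj m$.

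The main obstacle I anticipate is the careful sign bookkeeping in the relation $\alpha(1+2\kappa)=2\nu_+(1+\kappa)$: for $\kappa<-1/2$ the two sides have opposite signs (so no admissible $\omega$ exists), while for $\kappa\ge 1/\sqrt{2}$ the squared equation forces $\omega\ge m$, exhausting the complement. The genuineness claim for $\kappa\ne 0$ in Part~\itref{lemma-vl-3} then follows as the contrapositive of Part~\itref{lemma-vl-1}: a putative $L^2$ solution at $\lambda=\jj(m-\omega)$ with $\omega=\calT_\kappa$ would, by the $B=0$ analysis of Part~\itref{lemma-vl-1}, force $\kappa=0$; so for $\kappa\ne 0$ the bounded solution is genuinely non-$L^2$, whereas for $\kappa=0$ the formal threshold $\omega=\calT_0=m/3$ coincides with an $L^2$ embedded eigenvalue, matching the parenthetical remark.
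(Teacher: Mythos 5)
Your proposal is correct and follows essentially the same route as the paper: impose the jump conditions \eqref{a-vs-b} on the ansatz \eqref{eigenfunction}, and examine $D_{\omega,\kappa}$ at $\lambda=\pm\jj(m\pm\abs{\omega})$ with the same real-versus-imaginary and sign analyses (your Part 2 is in fact slightly more explicit than the paper's). One nitpick in Part 3: for $\kappa<-1$ the two sides of $\sqrt{m+\omega}\,(1+2\kappa)=2\sqrt{\omega}\,(1+\kappa)$ actually agree in sign (both negative), so the exclusion there comes not from sign incompatibility but from the squared equation forcing $\omega=\calT_\kappa=m(1+2\kappa)^2/(3+4\kappa)<0$, outside the admissible range $[0,m)$.
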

\begin{proof}
Because of the symmetry with respect to the sign
of $\omega$,
we will  assume  that
\[
0\le \omega<m.
\]

For Part~\itref{lemma-vl-1},
it suffices to
consider  $\lambda=\jj\Lambda$ with $\Lambda\ge m-\omega$.
In this case
\[
\nu_{-}=\sqrt{m^2-(\omega+\Lambda)^2}\in\jj\R;
\]
therefore,
in the expression for
the corresponding eigenfunction \eqref{eigenfunction}
one would need to take $B=0$.
There are two cases to consider:
\begin{itemize}
\item
If $\kappa\ne 0$,
the system \eqref{a-vs-b}
shows that one would also have $A=0$;
hence, $\lambda$ can not be an eigenvalue;
\item
If $\kappa=0$ and $\omega\in[m/3,m)$,
there is an embedded eigenvalue $\lambda=2\omega\jj$
since the system
\eqref{a-vs-b}
is satisfied for $B=0$ and any $A\in\C$.
(Note that in this case $\Lambda=2\omega\ge m-\omega$
and
$\nu_+=\varkappa=\alpha/2$.)
\end{itemize}

For Part~\itref{lemma-vl-2},
it is enough to consider
$\omega\in(0,m)$, $\lambda=\jj(m+\omega)$.
At this value of $\lambda$, one has
$\nu_{+}=0$,
 $\nu_{-}=\sqrt{m^2-(m+2\omega)^2}\in \jj\R\setminus \{0\}$.
Thus, 
$$
D_{\omega,\kappa}(\jj(m+\omega))
=\alpha^2(1+2\kappa)-2\nu_{-}\alpha(1+\kappa)
$$
can not be zero since  $\alpha>0$.
In the case $\omega=0$, one has $\nu_{\pm}=0$,
and then $D_{\omega,\kappa}(\jj m)=0$ if and only if $\kappa=-1/2$.

For Part~\itref{lemma-vl-3},
we need to consider
$\omega\in(0,m)$, $\lambda=\jj(m-\omega)$.
In this case
\[
\nu_{-}=0,
\quad
\nu_+=\sqrt{m^2-(2\omega-m)^2}
=2\sqrt{\omega(m-\omega)}>0
\]
by \eqref{re}. We need to solve:
\[
0=D_{\omega,\kappa}(\jj(m-\omega))
=
\alpha^2(1+2\kappa)-2\nu_{+}\alpha(1+\kappa).
\]
The equation  has no solution for $\kappa=-1$. In the case $\kappa\not=-1$, it is equivalent to
\[
\sqrt{m^2-\omega^2}\frac{1+2\kappa}{1+\kappa}=2\sqrt{\omega(m-\omega)},
\]
which leads to
\begin{eqnarray}\label{cT}
\omega=\calT_\kappa=m\frac{(1+2\kappa)^2}{3+4\kappa}=m\Big(\kappa+\frac 14+\frac{1}{4(3+4\kappa)}\Big).
\end{eqnarray}
We note that $0\le\calT_\kappa<m$ as long as
$\kappa\in[-1/2,1/\sqrt{2})$.
\end{proof}

\subsection{Discrete spectrum}

Now we will study  the discrete spectrum
of the operator $(\bfA(\omega,\kappa))$.
Recall that for any $\kappa\in\R$ and $\omega\in(-m,m)$ the operator  has zero eigenvalue of  of multiplicity $2$.
We start with the special  cases $\kappa=0$ and $\omega=0$.
\begin{lemma}[Eigenvalues of $\bfA(\omega,0)$, $\omega\ne 0$]
\label{K0}
$$
\sigma\sb{\mathrm{p}}(\bfA(\omega,0))=\{0,\pm 2\omega\jj\}\quad 0<|\omega|<m,
$$
with eigenvalues
$\lambda=\pm 2\omega\jj$ being simple.
The only eigenvalue of  $\bfA(0,0)$
is $\lambda=0$ of algebraic multiplicity $4$
and geometric multiplicity $2$.
\end{lemma}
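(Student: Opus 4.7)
The plan is to exploit the characteristic function $D_{\omega,\kappa}(\lambda)$ from \eqref{def-d-lambda}, whose zeros on the physical sheet encode the nonzero eigenvalues of $\bfA(\omega,\kappa)$ via the jump conditions \eqref{a-vs-b}. The key observation is that at $\kappa=0$ the $\alpha^2\kappa^2$ term disappears and the remaining expression factors as
\[
D_{\omega,0}(\lambda)
=\alpha^2-2(\nu_{+}+\nu_{-})\alpha+4\nu_{+}\nu_{-}
=(\alpha-2\nu_{+})(\alpha-2\nu_{-}),
\]
so that $D_{\omega,0}(\lambda)=0$ reduces to the two decoupled equations $\nu_{\pm}(\omega,\lambda)=\alpha/2=\varkappa$.

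Using $\varkappa^2=m^2-\omega^2$ together with \eqref{def-nu}, each equation $\nu_{\pm}(\omega,\lambda)=\varkappa$ squares to $(\omega\pm\jj\lambda)^2=\omega^2$, whose roots are $\lambda=0$ and $\lambda=\mp 2\omega\jj$. Thus the only candidate nonzero eigenvalues are $\lambda=\pm 2\omega\jj$. To confirm they correspond to genuine $L^2$ eigenfunctions, I will split into two regimes. For $|\omega|<m/3$, the values $\pm 2\omega\jj$ lie in the gap $\jj(-(m-|\omega|),\,m-|\omega|)$ of the essential spectrum (Lemma~\ref{lemma-sigma-a}~\itref{lemma-sigma-a-2}), so both $\nu_{\pm}$ are positive reals on the physical sheet and the expression \eqref{eigenfunction} is automatically in $L^2$. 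For $|\omega|\ge m/3$ the points $\pm 2\omega\jj$ are embedded in $\sigma\sb{\mathrm{ess}}$, and the existence of a corresponding embedded $L^2$ eigenfunction (with $\nu_{-}$-branch killed) has already been established in Lemma~\ref{lemma-vl}~\itref{lemma-vl-1}.

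For simplicity at $\lambda=\pm 2\omega\jj$, I note that at $\kappa=0$ the system \eqref{a-vs-b} decouples, after taking sum and difference of the two equations, into $(\alpha-2\nu_{+})A=0$ and $(\alpha-2\nu_{-})B=0$. At $\lambda=2\omega\jj$ one has $\nu_{+}=\varkappa=\alpha/2$ while $\nu_{-}^{2}=m^{2}-9\omega^{2}\ne\varkappa^{2}$ for $\omega\ne 0$; hence $A$ is free and $B=0$, giving a one-dimensional eigenspace spanned by $\w_{+}e^{-\varkappa|x|}$, with $\w_{+}$ from \eqref{eigenfunction}. The case $\lambda=-2\omega\jj$ is symmetric. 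Completeness of the list then follows from Lemma~\ref{lemma-sigma-a}~\itref{lemma-sigma-a-3}: $\sigma\sb{\mathrm{p}}(\bfA(\omega,0))\subset\R\cup\jj\R$, and since $\Omega_{0}=0$ in \eqref{def-o-k}, no nonzero real eigenvalues exist at $\kappa=0$; any purely imaginary eigenvalue either lies in the spectral gap and is a zero of $D_{\omega,0}$ on the physical sheet, or is embedded and captured by Lemma~\ref{lemma-vl}, both of which have been accounted for.

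The case $\omega=0$ is immediate: since $\nu_{+}=\nu_{-}=\sqrt{m^2+\lambda^2}$, the system \eqref{a-vs-b} collapses to $(\alpha-2\nu)A=(\alpha-2\nu)B=0$, which forces either $\nu=m$ (equivalently $\lambda=0$) or $A=B=0$; the multiplicity assertion for $\lambda=0$ at $\bfA(0,0)$ is then precisely Lemma~\ref{lemma-sigma-a}~\itref{lemma-sigma-a-7}. The main technical subtlety I expect is the bookkeeping at the boundary value $|\omega|=m/3$, where $\nu_{-}=0$ coincides with the virtual-level threshold $\calT_{0}=m/3$ from \eqref{def-omega-omega}: one must verify that the $\w_{+}$-only eigenfunction produced there is genuinely in $L^2$ rather than a non-$L^2$ virtual state, which is the content of the parenthetical remark in Lemma~\ref{lemma-vl}~\itref{lemma-vl-3} that at $\kappa=0$ this limit is realized by a true eigenfunction.
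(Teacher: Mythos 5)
Your proof is correct, but it takes a genuinely different route from the paper. The paper works with the $4\times 4$ block operator $\bfA(\omega,0)-\lambda I_4$ directly: it forms the Schur complement of the top-right $I_2$ block, obtaining
$S=\begin{bmatrix}-L_0(\omega)-\lambda^2&-2\lambda\omega\\2\lambda\omega&-L_0(\omega)-\lambda^2\end{bmatrix}$,
and then uses Lemma~\ref{lemma-sigma-lpm} ($\sigma_{\mathrm{p}}(L_0(\omega))=\{0\}$) to reduce the eigenvalue equation to the scalar condition $\det\begin{bmatrix}-\lambda^2&-2\lambda\omega\\2\lambda\omega&-\lambda^2\end{bmatrix}=0$, yielding $\lambda\in\{0,\pm 2\omega\jj\}$ in one shot, with no case split on whether $\pm 2\omega\jj$ is embedded. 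You instead work with the scalar characteristic function $D_{\omega,\kappa}(\lambda)$, observe the factorization $D_{\omega,0}=(\alpha-2\nu_+)(\alpha-2\nu_-)$, and locate the roots from $\nu_\pm=\varkappa$. The factorization is a nice observation (it also makes algebraic simplicity of $\pm 2\omega\jj$ transparent, since only one factor vanishes there and does so to first order), but the price is that you must argue separately about the embedded regime $|\omega|\ge m/3$, where $D_{\omega,\kappa}$ alone does not directly classify $L^2$ eigenfunctions and you need to fall back on Lemma~\ref{lemma-vl}~\itref{lemma-vl-1}; the paper's operator-theoretic reduction sidesteps this entirely because it never needs $\Re\nu_\pm>0$. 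Both approaches are valid; the paper's is more uniform across the spectral gap and the embedded region, while yours connects more cleanly to the Riemann-surface picture of $D_{\omega,\kappa}$ used later in Section~\ref{section-4}.
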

\begin{proof}
Considering
for which values  $\lambda$
the matrix
\begin{eqnarray}\label{a0}
\bfA(\omega,0)-\lambda I_4=
\begin{bmatrix}
-\lambda&-\omega&1&0
\\
\omega&0-\lambda&0&1
\\
-L_0(\omega)-\omega^2&0&-\lambda&-\omega
\\
0&-L_0(\omega)-\omega^2&\omega&-\lambda
\end{bmatrix}
\end{eqnarray}
has eigenvalue zero
reduces to studying this question
for the Schur complement of the top right block $I_2$,
\begin{eqnarray}\label{def-ss}
S=
\begin{bmatrix}-L_0(\omega)-\omega^2&0\\0&-L_0(\omega)-\omega^2\end{bmatrix}
-
\begin{bmatrix}-\lambda&-\omega\\\omega&-\lambda\end{bmatrix}^2
=
\begin{bmatrix}-L_0(\omega)-\lambda^2&-2\lambda\omega\\2\lambda\omega&-L_0(\omega)-\lambda^2\end{bmatrix}.
\end{eqnarray}
By Lemma~\ref{lemma-sigma-lpm},
$L_0(\omega)$ has the only eigenvalue $0$;
we conclude that $\lambda$ can be obtained from the equation
\[
\det\begin{bmatrix}-\lambda^2&-2\lambda\omega
\\2\lambda\omega&-\lambda^2\end{bmatrix}=0,
\]
which gives $\lambda=0$ and $\lambda=\pm 2\omega\jj$.
Recall that the eigenvalues $\pm 2\omega\jj$
are embedded into the essential spectrum
if $\abs{\omega}\ge m/3$.
In the case $\omega=0$,
the geometric multiplicity could be obtained from substituting
$\lambda=0$, $\omega=0$, and $\kappa=0$ into \eqref{a0}
and taking into account that
$\Lambda_0(\omega)=0$ is a simple eigenvalue of $L_0(\omega)$
(Lemma~\ref{lemma-sigma-lpm}).
\end{proof}
\begin{lemma} [Eigenvalues of $\bfA(0,\kappa)$]
\label{omega0}
$$
\sigma\sb{\mathrm{p}}(\bfA(0,\kappa))=
\left\{\begin{array}{ll}
0,\quad\quad \kappa\le -1/2,\\
\pm 2m\sqrt{\kappa(1+\kappa)},\quad \kappa>-1/2.
\end{array}\right.
$$
The  eigenvalues $\lambda=\pm 2m\sqrt{\kappa(1+\kappa)}$ are simple
(real if $\kappa>0$ and purely imaginary  if $-1/2<\kappa<0$).
\end{lemma}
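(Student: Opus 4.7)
The approach is to exploit the block-decoupling that occurs in $\bfA(\omega,\kappa)$ when $\omega = 0$. Inspecting \eqref{def-a}, every entry coupling the $(X_1,X_3)$-pair to the $(X_2,X_4)$-pair carries a factor of $\omega$, so at $\omega=0$ the $4\times 4$ operator splits into two independent $2\times 2$ blocks. Writing the eigenvalue equation $\bfA(0,\kappa)\Psi = \lambda\Psi$ componentwise with $\Psi=(X_1,X_2,X_3,X_4)^\top$, and using that $\p_x^2-m^2+\alpha(1+2\kappa)\delta(x)=-L_\kappa(0)$ when $\omega=0$ (so $\varkappa=m$, $\alpha=2m$), the system reduces to $X_3=\lambda X_1$, $X_4=\lambda X_2$ together with the two scalar problems
\[
L_\kappa(0)X_1=-\lambda^2 X_1,\qquad L_0(0)X_2=-\lambda^2 X_2.
\]

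Next I would apply Lemma~\ref{lemma-sigma-lpm} (Spectrum of $L_\kappa$) at $\omega=0$, $\varkappa=m$. For $\kappa\le -1/2$ the operator $L_\kappa(0)$ has empty point spectrum, so any eigenvalue of $\bfA(0,\kappa)$ must come from the $X_2$-sector, where $L_0(0)$ has only the (simple) eigenvalue $0$ with eigenfunction $\phi$; this forces $\lambda=0$. For $\kappa>-1/2$ the operator $L_\kappa(0)$ has the unique simple eigenvalue $\Lambda_\kappa(0)=-4m^2(\kappa+\kappa^2)$, so the nonzero eigenvalues of $\bfA(0,\kappa)$ are given by $\lambda^2=-\Lambda_\kappa(0)=4m^2\kappa(1+\kappa)$, that is,
\[
\lambda=\pm 2m\sqrt{\kappa(1+\kappa)},
\]
which is real when $\kappa>0$ and purely imaginary when $-1/2<\kappa<0$, exactly as claimed.

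For the simplicity assertion I would observe that since $\Lambda_\kappa(0)\ne 0$ whenever $\kappa\notin\{0,-1\}$, while $\sigma\sb{\mathrm{p}}(L_0(0))=\{0\}$, the two scalar problems cannot be simultaneously satisfied for the same nonzero $\lambda^2$; thus $X_2=X_4=0$ and the eigenvector of $\bfA(0,\kappa)$ must take the form $(\phi_\kappa,0,\lambda\phi_\kappa,0)^\top$, where $\phi_\kappa$ is the (simple) eigenfunction of $L_\kappa(0)$ corresponding to $\Lambda_\kappa(0)$. Uniqueness of $\phi_\kappa$ up to scaling then transfers to simplicity of $\lambda=\pm 2m\sqrt{\kappa(1+\kappa)}$ in $\sigma\sb{\mathrm{p}}(\bfA(0,\kappa))$.

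There is no genuine obstacle; the only point requiring care is the bookkeeping of the sign conventions between $L_\kappa(\omega)$ (defined in \eqref{D} with a minus sign on $\p_x^2$) and the corresponding lower-left entries of $\bfA(\omega,\kappa)$ in \eqref{def-a}, together with the observation that the degenerate case $\kappa=0$ collapses both nonzero eigenvalues onto $\lambda=0$, consistently with Lemma~\ref{K0}.
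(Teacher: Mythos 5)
Your proof is correct, but it takes a genuinely different route from the paper. The paper stays within the determinant formalism of Section~\ref{section-4}: it substitutes $\omega=0$ into $D_{\omega,\kappa}(\lambda)$ from \eqref{def-d-lambda}, obtains $D_{0,\kappa}(\lambda)=8m^2(1+\kappa)+4\lambda^2-8m(1+\kappa)\sqrt{m^2+\lambda^2}$, and solves this algebraic equation (after isolating the radical and squaring, with the sign constraint $2m^2+\lambda^2/(1+\kappa)>0$ selecting the branch). You instead exploit the structural fact that all off-diagonal coupling in $\bfA(\omega,\kappa)$ carries a factor of $\omega$, so at $\omega=0$ the $4\times4$ problem decouples into the two scalar second-order problems $L_\kappa(0)X_1=-\lambda^2X_1$ and $L_0(0)X_2=-\lambda^2X_2$, which you then dispatch with Lemma~\ref{lemma-sigma-lpm}. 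Your approach is arguably more conceptual, reusing the already-computed point spectrum of $L_\kappa$ rather than redoing an explicit radical computation, and it makes the source of each eigenvalue (the $\kappa$-block vs.\ the $0$-block) transparent; it also gives simplicity for free, since the eigenvector is forced to have $X_2=X_4=0$ and $X_1$ in the one-dimensional $\Lambda_\kappa(0)$-eigenspace. The paper's approach has the virtue of uniformity with the rest of Section~\ref{section-4}, where the determinant $D_{\omega,\kappa}(\lambda)$ is the central object and must be used for general $\omega\neq0$ where no such decoupling exists. One small point both treatments leave implicit: the root $\lambda=0$ (from the $X_2$-sector, or from the $\lambda^2=0$ branch of the squared equation) persists for all $\kappa$, even though the displayed formula in the statement lists only the nonzero pair for $\kappa>-1/2$; your phrasing ``the nonzero eigenvalues of $\bfA(0,\kappa)$'' handles this correctly.
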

\begin{proof}
In the case $\omega=0$,
one has $\nu_{\pm}=\sqrt{m^2+\lambda^2}$ and $\alpha=2m$. Then
\begin{eqnarray}\nonumber
D_{0,\kappa}(\lambda)&=&4m^2(1+2\kappa)-8 m(1+\kappa)\sqrt{m^2+\lambda^2}+4(m^2+\lambda^2)\\
\nonumber
&=&8m^2(1+\kappa)+4\lambda^2-8 m(1+\kappa)\sqrt{m^2+\lambda^2}.
\end{eqnarray}
In the case $\kappa=-1$, $D_{0,\kappa}(\lambda)$ has the only root $\lambda=0$
of multiplicity $2$. For $\kappa\not=-1$,
$D_{0,\kappa}(\lambda)=0$ if and only if
\begin{equation}\label{om0-c}
2m^2+\frac{\lambda^2}{1+\kappa}>0 ~~{\rm and}~~\Big(2m^2+\frac{\lambda^2}{1+\kappa}\Big)^2=4m^2(m^2+\lambda^2). 
\end{equation}
The second condition gives $\lambda^2=0$ and $\frac{\lambda^2}{1+\kappa}=4m^2\kappa$.
Using the first condition of \eqref{om0-c},
we conclude that $\kappa>-1/2$, and then $\lambda=2m\sqrt{\kappa(1+\kappa)}$.
\end{proof}
Denote
\begin{eqnarray}\label{def-k-omega}
K_{\omega}=\frac{\sqrt{m^2-(2|\omega|-m)^2}-\varkappa}{2\varkappa-\sqrt{m^2-(2|\omega|-m)^2}}
=\frac{2\sqrt{\frac{|\omega|}{m+|\omega|}}-1}{2-2\sqrt{\frac{|\omega|}{m+|\omega|}}},
\qquad\omega\in(-m,m).
\end{eqnarray}
We note that
\begin{equation}\label{Kom}
K_{\omega}\le 0~~{\rm for}~~\omega\le m/3,\quad {\rm and}~~K_{\omega}\ge 0~~{\rm for}~~\omega\ge m/3.
\end{equation}
We note also that for $\omega\in[0,m)$ the function
$\omega\mapsto K_\omega$
is the inverse to the function
$\kappa\mapsto\calT_\kappa$
defined in \eqref{def-omega-omega}.
\begin{lemma}
\label{lemma-negative}
For $\omega\in(-m,m)$ and $\kappa\in (-\infty,0)\cup (0,K_{\omega})$,
$\sigma\sb{\mathrm{p}}(\bfA(\omega,\kappa))=\{0\}$.
 \end{lemma}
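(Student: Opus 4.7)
The strategy is to combine the real/imaginary dichotomy of Lemma~\ref{lemma-sigma-a}.\itref{lemma-sigma-a-3} with a direct sign analysis of the characteristic function $D_{\omega,\kappa}(\lambda)$ from \eqref{def-d-lambda} on the physical sheet. By that lemma, any point eigenvalue lies in $\R\cup\jj\R$, so we rule out a nonzero eigenvalue on each axis separately.

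On the real axis, Lemma~\ref{lemma-sigma-a}.\itref{lemma-sigma-a-3} already produces a pair of nonzero real eigenvalues exactly when $\kappa>\omega^2/m^2$. One therefore only has to verify the elementary inequality $K_\omega\le\omega^2/m^2$ for $\omega\in(-m,m)$; writing both sides in terms of $s=\sqrt{|\omega|/(m+|\omega|)}$ this reduces to the polynomial inequality $4s^4+s^3-3s^2-s+1\ge 0$ on $[0,1/\sqrt 2]$, which is immediate. Under the hypothesis one then has $\kappa<\omega^2/m^2$, so no nonzero real eigenvalue exists.

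The core is the imaginary axis. The embedded possibility $\lambda=\pm 2\omega\jj$ is excluded by Lemma~\ref{lemma-vl}.\itref{lemma-vl-1} (which forces $\kappa=0$), so it suffices to show $D_{\omega,\kappa}(\jj\Lambda)\ne 0$ for $\Lambda\in(0,m-|\omega|)$. Using the identity $(1+\kappa)^2-\kappa^2=1+2\kappa$, the determinant \eqref{def-d-lambda} factors nicely:
\[
\tfrac{1}{4}D_{\omega,\kappa}(\jj\Lambda)=:F(\Lambda)=\bigl(\nu_+-\varkappa(1+\kappa)\bigr)\bigl(\nu_--\varkappa(1+\kappa)\bigr)-\varkappa^2\kappa^2,
\]
with $\nu_\pm(\Lambda)=\sqrt{m^2-(\omega\mp\Lambda)^2}>0$ on this interval. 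Three observations then pin down $F$: (i) $F(0)=0$ and $F$ is even in $\Lambda$ (the map $\Lambda\mapsto-\Lambda$ exchanges $\nu_+,\nu_-$); (ii) a direct computation yields $F''(0)=2(m^2\kappa-\omega^2)/\varkappa^2$, which is strictly negative under the hypothesis (either $\kappa<0$ or $\kappa<K_\omega\le\omega^2/m^2$), so $F<0$ for small $\Lambda>0$; (iii) at the upper endpoint, $\nu_-(m-|\omega|)=0$ and direct evaluation (cf. the proof of Lemma~\ref{lemma-vl}.\itref{lemma-vl-3}) gives
\[
F(m-|\omega|)=(m-|\omega|)\sqrt{m+|\omega|}\bigl[\sqrt{m+|\omega|}(1+2\kappa)-2\sqrt{|\omega|}(1+\kappa)\bigr],
\]
which is negative precisely when $\kappa<K_\omega$. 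Thus $F$ is negative at both endpoints.

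The main obstacle is promoting ``$F<0$ at both ends'' to ``$F<0$ throughout $(0,m-|\omega|)$''. My plan is to rationalize $F(\Lambda)=0$ by squaring twice—first eliminating $\nu_++\nu_-$ via $(\nu_++\nu_-)^2=2(\varkappa^2-\Lambda^2)+2\nu_+\nu_-$, then $\nu_+\nu_-$ via $(\nu_+\nu_-)^2=(\varkappa^2-\Lambda^2)^2-4\omega^2\Lambda^2$—producing a polynomial $P(\mu)$ of degree four in $\mu=\Lambda^2$ with $P(0)=0$ and $P'(0)=16\varkappa^4\kappa^2(1+\kappa)(m^2\kappa-\omega^2)$. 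Factoring out $\mu$ leaves a cubic $Q(\mu)$ whose signs at $\mu=0$ and $\mu=(m-|\omega|)^2$ are dictated by (ii) and the square of (iii), and a direct sign analysis then rules out real roots in the open interval. A cleaner alternative, sidestepping the quartic bookkeeping, is a homotopy in $\kappa$: by Lemma~\ref{lemma-vl}.\itref{lemma-vl-3} an isolated imaginary eigenvalue can enter the open gap only by emerging from the virtual level at $\pm\jj(m-|\omega|)$, which happens only at $\kappa=K_\omega$; by Lemma~\ref{lemma-sigma-a}.\itref{lemma-sigma-a-5} it can only leave through $\lambda=0$ at $\kappa=\omega^2/m^2$; under the hypothesis neither transition is crossed. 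I expect the delicate step—making the ``no-crossing'' assertion precise via a continuity/counting argument for the zeros of $D_{\omega,\kappa}$ on the physical sheet, including the endpoint $\kappa\to-\infty$—to be the technically most demanding part of the proof.
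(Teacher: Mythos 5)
Your setup is sound and your computations check out as far as they go: the factorization $\tfrac14 D_{\omega,\kappa}(\jj\Lambda)=(\nu_+-\varkappa(1+\kappa))(\nu_--\varkappa(1+\kappa))-\varkappa^2\kappa^2=:F(\Lambda)$, the expansion $F(\Lambda)=\varkappa^{-2}(m^2\kappa-\omega^2)\Lambda^2+O(\Lambda^4)$, the value of $F(m-\abs{\omega})$, and the reduction of the real-axis case to the inequality $K_\omega\le\omega^2/m^2$ are all correct. But the proof is not complete. The step you yourself flag --- passing from ``$F<0$ near $\Lambda=0$ and at $\Lambda=m-\abs{\omega}$'' to ``$F<0$ on all of $(0,m-\abs{\omega})$'' --- is the heart of the matter and is left as two unexecuted sketches. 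Endpoint signs cannot settle it: $F$ genuinely acquires interior zeros for nearby parameter values (that is exactly the eigenvalue of Theorem~\ref{theorem-d-lambda}~\itref{theorem-d-lambda-4}), so a pair of interior zeros, or a tangential one, must be excluded quantitatively. The paper does this by writing $D_{\omega,\kappa}=(\alpha-2\nu_+)(\alpha-2\nu_-)\bigl(1+\kappa\varkappa Q(\Lambda)\bigr)$ and proving that $Q$ is monotone on the relevant subintervals of the gap (via an explicit series for $\nu_\pm$ whose terms all have one sign), so that $\abs{\kappa\varkappa Q}$ stays below its endpoint value $\abs{\kappa}/\abs{K_\omega}<1$, or else $\kappa\varkappa Q$ keeps a favorable sign throughout. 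Some such global monotonicity or convexity input is what your proposal is missing; neither the quartic ``sign analysis'' nor the homotopy argument is carried out, and each requires real work (a cubic with prescribed endpoint signs can still have two roots inside; the homotopy needs a base case and a no-crossing principle).

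Second, your assertion ``thus $F$ is negative at both endpoints'' does not follow from the stated hypothesis. You correctly observe that $F(m-\abs{\omega})<0$ precisely when $\kappa<K_\omega$; but for $\abs{\omega}<m/3$ one has $K_\omega<0$, so the hypothesis $\kappa\in(-\infty,0)$ admits $\kappa\in(K_\omega,0)$, where $F(m-\abs{\omega})>0$ while $F<0$ near $0$ --- hence $F$ \emph{does} vanish in the gap and a nonzero purely imaginary eigenvalue exists (it is the continuation of $\pm2\omega\jj$ from $\kappa=0$; compare Theorem~\ref{theorem-d-lambda}~\itref{theorem-d-lambda-4}, whose range $(K_\omega,\omega^2/m^2)$ contains negative $\kappa$ when $\abs{\omega}<m/3$). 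The paper's own proof in fact only treats the regimes $\kappa<K_\omega<0$ and $0<\kappa<K_\omega$, i.e., the hypothesis of Theorem~\ref{theorem-d-lambda}~\itref{theorem-d-lambda-5}; your argument should be run under that hypothesis, and the discrepancy with the lemma as stated should be flagged rather than silently bridged. A minor further point: Lemma~\ref{lemma-vl}~\itref{lemma-vl-1} as stated only concerns $\lambda=\pm2\omega\jj$, whereas your reduction to $\Lambda\in(0,m-\abs{\omega})$ needs the absence of embedded eigenvalues at \emph{every} point of the essential spectrum for $\kappa\ne0$; this is what the proof of that lemma shows ($B=0$ forces $A=0$), so you should invoke the argument, not just the statement.
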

\begin{proof}
For  $\omega=0$, the statement follows from Lemma \ref{omega0};
because of the symmetry with respect to the sign of $\omega$,
it suffices to consider
$\omega\in(0,m)$.
Due to  Lemma~\ref{lemma-sigma-a}~\itref{lemma-sigma-a-3},
it suffices to prove
that 
$$
\sigma\sb{\mathrm{p}}(\bfA(\omega,\kappa))\cap \jj\R =\{0\},
\qquad
\kappa\in (-\infty,0)\cup (0,K_{\omega}),\quad  \omega\in(0,m).
$$ 
It suffices to prove that
\begin{equation}\label{d}
D_{\omega,\kappa}(\lambda)\ne 0,\quad \lambda\in  \jj\R\setminus \{0\}, \qquad
\kappa\in (-\infty,0)\cup (0,K_{\omega}), \quad \omega\in(0,m),
\end{equation}
with $D_{\omega,\kappa}(\lambda)$ defined in \eqref{def-d-lambda}.
We rewrite  $D_{\omega,\kappa}(\lambda)$ as
\begin{equation}\label{d1}
D_{\omega,\kappa}(\lambda)= (\alpha-2\nu_{+})(\alpha-2\nu_{-})+2\kappa\alpha(\alpha-\nu_+-\nu_-).
\end{equation}
Note that
$\lambda=\pm 2\omega\jj$ are not roots of
$D_{\omega,\kappa}(\lambda)\ne 0$ for $\omega>0$ and $\kappa\ne 0$.
Indeed, for $\lambda=2\omega\jj$ one has $\alpha-2\nu_{+}=0$, and then
\[
D_{\omega,\kappa}(2\omega\jj)
=\alpha\kappa(\alpha-2\nu_-)
=\alpha\kappa
\Big(
2\sqrt{m^2-\omega^2}-2\sqrt{m^2-9\omega^2}
\Big)\ne 0;
\]
the case $\lambda=-2\omega\jj$ is treated similarly.

For $\lambda\ne\{0,\pm 2\omega\jj\}$,
we may rewrite \eqref{d1} as
$$
D_{\omega,\kappa}(\lambda)= (\alpha-2\nu_{+})(\alpha-2\nu_{-})\Big(1+\kappa\alpha\Big( \frac{1} {\alpha-2\nu_{+}}+ \frac{1} {\alpha-2\nu_{-}}\Big)\Big).
$$
It remains to prove that for
$\omega\in(0,m)$ and
$\kappa\in (-\infty,0)\cup (0,K_{\omega})$, the equation
\[
1+\kappa\alpha\Big( \frac{1} {\alpha-2\nu_{+}}+ \frac{1} {\alpha-2\nu_{-}}\Big)=0
\]
has no solutions $\lambda\in  \jj\R\setminus \{0,\pm 2\omega\jj\}$.
Denoting $\Lambda=\jj\lambda$, we rewrite  the above equation  as
\begin{equation}\label{eq1}
1+\kappa\varkappa Q(\Lambda)=0,
\end{equation}
where
\begin{eqnarray}\label{r-lambda}
Q(\Lambda)
&=&\frac{1} {\sqrt {m^2-\omega^2}-\sqrt{m^2-(\omega+\Lambda)^2}}+ \frac{1} {\sqrt {m^2-\omega^2}-\sqrt{m^2-(\omega-\Lambda)^2}}
\nonumber
\\
&=&
\frac{1}{\Lambda}\biggr[\frac{\varkappa+\sqrt{\varkappa^2-\lambda^2-2\omega\Lambda}}{\Lambda+2\omega}
+ \frac{\varkappa+\sqrt{\varkappa^2-\Lambda^2+2\omega\Lambda}}{\Lambda-2\omega}\biggr].
\end{eqnarray}
Due to (\ref{Kom}) it suffices to prove that there are no solution to  (\ref{eq1}) in the following domains:

{\it (1)} $0<\omega<m/3$, $\ \kappa<T_\kappa(\omega)<0$, 
$\ 2\omega<\Lambda< m-\omega$;

{\it (2)}
$0<\omega<m/3$, $\ \kappa<T_\kappa(\omega)<0$,  $\ 0<\Lambda< 2\omega$;

{\it (3)}
$m/3<\omega<m$, $\ 0<\kappa< T_\kappa(\omega)$, $\ 0<\Lambda< m-\omega$.

\smallskip
\noindent
{\it (1)}\quad
Note that $Q(\Lambda)>0$ for $2\omega<\Lambda< m-\omega$, and  
$$
Q'(\Lambda)=-\sum\limits_{\pm}\frac{\Lambda\pm \omega}{\sqrt{m^2-(\Lambda\pm \omega)^2}\,\Big(\varkappa-\sqrt{m^2-(\Lambda\pm \omega)^2}\Big)^2}<0. 
$$
Hence,
\begin{eqnarray}\nonumber
\inf\limits_{(2\omega,m-\omega)} Q(\Lambda)&=&Q(m-\omega)=\frac{1} {\varkappa}+ \frac{1} {\varkappa-\sqrt{m^2-(2\omega-m)^2}}
=\frac{1}{\varkappa}\frac{2\sqrt{m^2-\omega^2}-2\sqrt{\omega(m-\omega)}} {\sqrt{m^2-\omega^2}-2\sqrt{\omega(m-\omega)}}\\
\label{Qmin}
&=&\frac{1}{\varkappa}\frac{2\sqrt{m+\omega}-2\sqrt{\omega}} {\sqrt{m+\omega}-2\sqrt{\omega}}
=\frac{1}{\varkappa}\frac{2-2\sqrt{\frac{\omega}{m+\omega}}} {1-2\sqrt{\frac{\omega}{m+\omega}}}
=\frac{1}{\varkappa}|K_\omega|^{-1}>0.
\end{eqnarray}
and  
\[
|\kappa|\varkappa Q(\Lambda)>
|K_\omega|\varkappa Q(m-\omega)=1.
\]
Therefore (\ref{eq1}) has no solutions in the first domain.

\smallskip

\noindent
{\it (2)}\quad
Note that  $0<\Lambda<\min\{ 2\omega,m-\omega\}$ in the second and  the third domains. Hence 
$|\Lambda(\Lambda\pm 2\omega)|<\varkappa^2$, and  the  following expansion holds:
\beqn\nonumber
\sqrt{m^2-(\omega\pm\Lambda)^2}&=&\sqrt{\varkappa^2-\Lambda(\Lambda\pm 2\omega)}
=\varkappa
\bigg(
1-
\frac{\Lambda(\Lambda\pm 2\omega)}{2\varkappa^2}
-
\sum_{n=2}^\infty
\frac{(2n-3)!!
\Lambda^n(\Lambda\pm 2\omega)^n}{n!2^n\varkappa^{2n}}
\bigg).
\eeqn
The above leads to
\[
\frac{\varkappa+\sqrt{m^2-(\omega\pm\Lambda)^2}}{\Lambda(\Lambda\pm2\omega)}
=\frac{2\varkappa}{\Lambda(\Lambda\pm2\omega)}-\frac{1}{2\varkappa}
-
\sum_{n=2}^\infty
\frac{(2n-3)!!
\Lambda^{n-1}(\Lambda\pm 2\omega)^{n-1}}{n!2^n\varkappa^{2n-1}},
\]
and then we derive from \eqref{r-lambda}:
\beqn\nonumber
Q(\Lambda)&=&\frac{\varkappa+\sqrt{m^2-(\omega+\Lambda)^2}}{\Lambda(\Lambda+2\omega)}
+\frac{\varkappa+\sqrt{m^2-(\omega-\Lambda)^2}}{\Lambda(\Lambda-2\omega)}\\
\label{S-exp}
&=&-\frac{1}{\varkappa}-\frac{4\varkappa}{4\omega^2-\Lambda^2}
-\sum_{n=1}^\infty\frac{(2n-1)!!
\Lambda^{n}\big((\Lambda+2\omega)^{n}+(\Lambda- 2\omega)^{n}\big)}{(n+1)!2^{n+1}\varkappa^{2n+1}}<0
\eeqn
since each summand in $\sum\limits_{n=1}^\infty$   is positive for $0<\Lambda<2\omega$. 
This  immediately implies that (\ref{eq1}) has no solution  in the second domain. 

\smallskip
\noindent
{\it (3)}\quad
In the third domain  $m-\omega<2\omega$.  Let us show that $Q(\Lambda)$  decreases monotonically on $(0,m-\omega)$. Indeed, (\ref{S-exp}) implies
\beqn\nonumber
Q'(\Lambda)&=&-\frac{8\Lambda\varkappa}{(4\omega^2-\Lambda^2)^2}-\sum_{n=1}^\infty\frac{(2n-1)!!
n\Lambda^{n-1}\big((\Lambda+2\omega)^{n}+(\Lambda- 2\omega)^{n}\big)}{(n+1)!2^{n+1}\varkappa^{2n+1}}\\
\nonumber
&-&\sum_{n=1}^\infty\frac{(2n-1)!!
n\Lambda^{n}\big((\Lambda+2\omega)^{n-1}+(\Lambda- 2\omega)^{n-1}\big)}{(n+1)!2^{n+1}\varkappa^{2n+1}}<0.
\eeqn
Hence, similarly to (\ref{Qmin}),
$$
\sup\limits_{(0,m-\omega)}|Q(\Lambda)|= |Q(m-\omega)|=\frac{1} {\sqrt{m^2-(2\omega-m)^2}-\varkappa}-\frac{1} {\varkappa}
=\frac{1} {\varkappa}K_{\omega}^{-1},
$$
and 
$$
\kappa\varkappa |Q(\Lambda)|<K_\omega\varkappa |Q(m-\omega)|=1.
$$
Hence (\ref{eq1})
has no solutions
in this domain. 
\end{proof}

Now we collect all the facts  about the eigenvalues  obtained above.

\begin{theorem}[Eigenvalues of $\bfA(\omega,\kappa)$]
\label{theorem-d-lambda}
\quad
\begin{enumerate}
\item
\label{theorem-d-lambda-1}
For $\kappa=\omega^2/m^2$,
one has
$\sigma\sb{\mathrm{p}}(\bfA(\omega,\kappa))=\{0\}$
of algebraic multiplicity $4$;
its geometric multiplicity is $1$ if $\omega\ne 0$
and $2$ if $\omega=0$;
\item
\label{theorem-d-lambda-2}
For $\omega\ne 0$,
$\kappa=0$,
one has $\sigma\sb{\mathrm{p}}(\bfA(\omega,0))=\{0,\,\pm 2\omega\jj\}$
($\pm 2\omega\jj\in\sigma\sb{\mathrm{ess}}(\bfA(\omega,\kappa))$
when $\abs{\omega}\ge m/3$);
\item
\label{theorem-d-lambda-3}
For
$\kappa>\omega^2/m^2$,
one has
$\sigma\sb{\mathrm{p}}(\bfA(\omega,\kappa))=\{0,\,\pm\lambda\}$
with some $\lambda>0$;
\item
\label{theorem-d-lambda-4}
For $\kappa\in(K_\omega,\omega^2/m^2)$,
one has
$\sigma\sb{\mathrm{p}}(\bfA(\omega,\kappa))=\{0,\pm\jj\Lambda\}$,
$\Lambda>0$;
\item
\label{theorem-d-lambda-5}
$\kappa\le K_\omega$, $\kappa\ne 0$,
one has $\sigma\sb{\mathrm{p}}(\bfA(\omega,\kappa))=\{0\}$.
\end{enumerate}
\end{theorem}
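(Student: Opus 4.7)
The plan is to assemble Theorem~\ref{theorem-d-lambda} from the lemmas already at our disposal, adding only one genuinely new ingredient (the existence of a pair of purely imaginary eigenvalues in the window $\kappa\in(K_\omega,\omega^2/m^2)$). I will go through the five parts in the order in which they can be extracted.

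First I would dispose of the parts that are essentially already done. Part~\itref{theorem-d-lambda-1} is immediate from Lemma~\ref{lemma-sigma-a}~\itref{lemma-sigma-a-5} (for $\omega\ne 0$) and~\itref{lemma-sigma-a-7} (for $\omega=0$): these already record the algebraic and geometric multiplicities at $\lambda=0$, and Lemma~\ref{lemma-negative}, applied at the boundary value $\kappa=\omega^2/m^2$ together with the symmetry of $\sigma\sb{\mathrm{p}}(\bfA)$ under $\lambda\mapsto-\lambda$, excludes any further eigenvalue. Part~\itref{theorem-d-lambda-2} is precisely Lemma~\ref{K0}, combined with the observation that $\pm 2\omega\jj$ is embedded in $\sigma\sb{\mathrm{ess}}(\bfA)=\jj(\R\setminus(-m+\abs\omega,m-\abs\omega))$ iff $\abs\omega\ge m/3$. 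Part~\itref{theorem-d-lambda-3} follows from Lemma~\ref{lemma-sigma-a}~\itref{lemma-sigma-a-3}: the condition $\kappa>\omega^2/m^2$ is exactly $\abs\omega<\Omega_\kappa=m\sqrt\kappa$, which by that lemma produces a pair $\pm\lambda$ with $\lambda>0$, and absence of other eigenvalues (in particular no nonreal ones) is again ensured because such eigenvalues would force $\lambda^2\in\R$ of the wrong sign by the variational argument in the proof of that lemma.

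Next, for Part~\itref{theorem-d-lambda-5} ($\kappa\le K_\omega$, $\kappa\ne0$), Lemma~\ref{lemma-negative} covers the strict inequality $\kappa<K_\omega$ (together with all negative $\kappa$). The borderline case $\kappa=K_\omega$, which is positive only when $\abs\omega>m/3$, is exactly the virtual-level case $\omega=\calT_\kappa$ identified in Lemma~\ref{lemma-vl}~\itref{lemma-vl-3}: there the would-be eigenfunction sits at the edge $\lambda=\pm\jj(m-\abs\omega)$ with $\nu_{-}=0$, hence is not in $L^2$ and produces no eigenvalue; no other imaginary or real $\lambda$ satisfies $D_{\omega,\kappa}(\lambda)=0$ for this value of $\kappa$ because the only way they could appear is by continuation from $\kappa<K_\omega$ through the imaginary axis, which is ruled out by Lemma~\ref{lemma-negative}.

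The heart of the argument is Part~\itref{theorem-d-lambda-4}, which is the only case requiring something new. My plan is a continuity/bifurcation argument driven by the determinantal function $D_{\omega,\kappa}(\lambda)$ from \eqref{def-d-lambda}. Fix $\omega\in(-m,m)\setminus\{0\}$ and view $\kappa\mapsto D_{\omega,\kappa}(\lambda)$ as a real-analytic family on the imaginary axis $\lambda=\jj\Lambda$, $\Lambda\in(0,m-\abs\omega)$. At $\kappa=K_\omega$ Lemma~\ref{lemma-vl}~\itref{lemma-vl-3} says $D_{\omega,K_\omega}(\jj(m-\abs\omega))=0$, and at $\kappa=\omega^2/m^2$ Part~\itref{theorem-d-lambda-1} (algebraic multiplicity $4$ at $\lambda=0$) together with the factorization used in Lemma~\ref{lemma-negative} implies that $D_{\omega,\kappa}$ acquires a double zero at $\Lambda=0$. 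Hence a simple zero of $D_{\omega,\kappa}$ on $\jj\R$ enters the gap from the upper threshold as $\kappa$ decreases from $\omega^2/m^2$ and exits at the lower endpoint when $\kappa=K_\omega$. To turn this heuristic into a proof I would: (i) show that $D_{\omega,\kappa}(\jj\Lambda)$ is continuous and nonvanishing at $\Lambda=m-\abs\omega$ for $K_\omega<\kappa<\omega^2/m^2$ and nonvanishing at $\Lambda=0$ for $\kappa\ne\omega^2/m^2$, so no eigenvalue can escape through the endpoints in the open window; (ii) use the intermediate value theorem on the monotone function $\Lambda\mapsto Q(\Lambda)$ analyzed in \eqref{S-exp} (whose monotonicity and range on $(0,m-\abs\omega)$ are already computed in the proof of Lemma~\ref{lemma-negative}) to produce exactly one solution $\Lambda=\Lambda(\omega,\kappa)\in(0,m-\abs\omega)$ of $1+\kappa\varkappa Q(\Lambda)=0$ for each $\kappa$ in the window. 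By the $\lambda\mapsto-\lambda$ symmetry this gives the pair $\pm\jj\Lambda$. Finally, Lemma~\ref{lemma-sigma-a}~\itref{lemma-sigma-a-3} rules out real eigenvalues in this regime and the algebraic-multiplicity $2$ of $\lambda=0$ (Lemma~\ref{lemma-sigma-a}~\itref{lemma-sigma-a-4}) excludes further eigenvalues at the origin.

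The main obstacle is Part~\itref{theorem-d-lambda-4}: guaranteeing that exactly one purely imaginary eigenvalue pair exists throughout the interval and that it varies continuously without being absorbed into the essential spectrum or into the zero eigenvalue prematurely. My strategy reduces this to the monotonicity of $Q(\Lambda)$ on $(0,m-\abs\omega)$ established inside the proof of Lemma~\ref{lemma-negative}; the inequality chain leading to \eqref{Qmin} shows $\varkappa Q(m-\abs\omega)=|K_\omega|^{-1}$, so $1+\kappa\varkappa Q=0$ has a solution precisely when $\kappa\in(K_\omega,0)\cup(\omega^2/m^2,\cdot)$-compatible signs, which is exactly the stated interval. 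Once this monotonicity picture is in place the rest of Part~\itref{theorem-d-lambda-4} is bookkeeping.
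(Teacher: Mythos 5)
Your proposal is essentially the same assembly as the paper's: Parts 1, 2, 3, 5 are read off Lemma~\ref{lemma-sigma-a}, Lemma~\ref{K0}, and Lemma~\ref{lemma-negative}, and Part~\itref{theorem-d-lambda-4} is the only genuinely new step. For that step the paper uses a soft bifurcation picture (fix $\kappa$, move $\omega$ across $\Omega_\kappa$ and watch the eigenvalue pair travel to the threshold at $\omega=\pm\calT_\kappa$, then verify that $\omega=\calT_\kappa$ is equivalent to $\kappa=K_\omega$), whereas you fix $\omega$, vary $\kappa$, and reduce to the intermediate value theorem on the monotone function $Q$ using the endpoint values $Q(0^+)=-m^2/(\varkappa\omega^2)$ and $Q(m-\abs\omega)=\pm 1/(\varkappa K_\omega)$ already recorded in the proof of Lemma~\ref{lemma-negative}. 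That is a legitimate alternative and arguably more explicit: the same monotonicity also delivers \emph{uniqueness} of the imaginary pair for free, and it simultaneously recovers the ``no gap eigenvalues'' assertion needed in Part~\itref{theorem-d-lambda-3} (because for $\kappa>\omega^2/m^2$ the target value $-1/(\kappa\varkappa)$ lies outside the range of $Q$), which neither your Part~\itref{theorem-d-lambda-3} sketch nor the paper's one-line citation of Lemma~\ref{lemma-sigma-a}~\itref{lemma-sigma-a-3} actually justifies as written.

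Two small imprecisions worth fixing. In Part~\itref{theorem-d-lambda-1} you invoke Lemma~\ref{lemma-negative} ``at the boundary value $\kappa=\omega^2/m^2$'', but that lemma is stated only for $\kappa\in(-\infty,0)\cup(0,K_\omega)$, and $\omega^2/m^2>K_\omega$, so it does not apply there; the correct route is the one the paper follows, namely the factorization $D_{\omega,\kappa}(\lambda)=(\alpha-2\nu_+)(\alpha-2\nu_-)(1+\kappa\varkappa Q(\jj\lambda))$ together with $1+\kappa\varkappa Q(\jj\lambda)=1-\kappa m^2/\omega^2+O(\lambda^2)$, which shows $\lambda=0$ is a root of order exactly four and nothing else is in the gap. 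In Part~\itref{theorem-d-lambda-3} the claim that the variational argument in Lemma~\ref{lemma-sigma-a}~\itref{lemma-sigma-a-3} itself excludes purely imaginary eigenvalues is not supported: that argument is an extremization giving the sign of $\mu$ and says nothing about the total count; use the $Q$-monotonicity argument above instead.
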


\begin{figure}[ht!]
\begin{center}
\includegraphics[width=12cm]{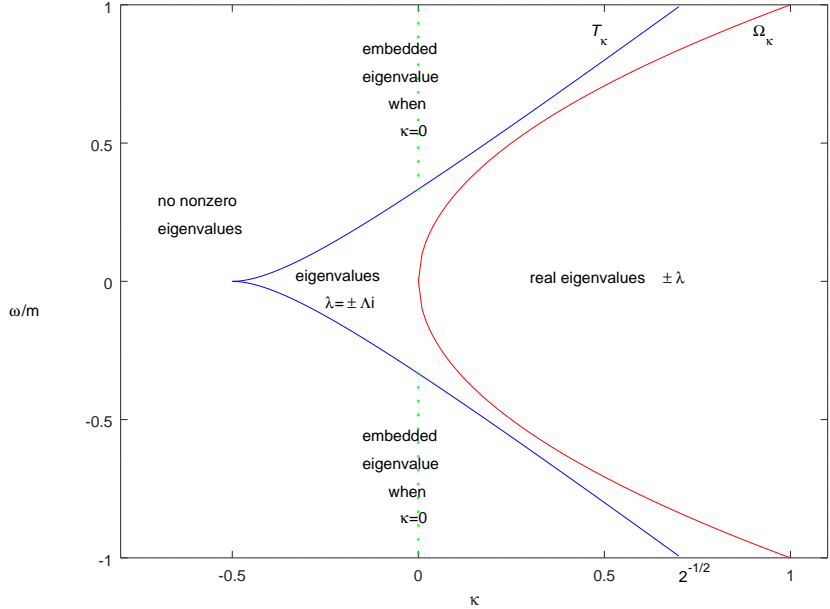}

\end{center}
\caption{
Location of simple eigenvalues $\pm\lambda$
for different values of parameters
$\kappa\in\R$ and $\omega\in(-m,m)$.
The values of $\omega$
on the Kolokolov curve
$\Omega_\kappa$ ($\kappa=\omega^2/m^2$) correspond to
collision of two simple eigenvalues at $\lambda=0$
as indicated by the Kolokolov condition.
The values of $\omega$ on the virtual level curve
$\calT_\kappa$
(\textcolor{blue}{blue curves} for $\kappa\in(-1/2, 1/\sqrt{2}$)
correspond to
virtual levels at thresholds
$\pm\jj(m-\abs{\omega})$.
The regions between the Kolokolov and virtual level lines
(dots on the plot)
correspond to two simple purely imaginary eigenvalues $\pm\jj\Lambda$
in the spectral gap.
The values $\abs{\omega}\ge m/3$ at $\kappa=0$
correspond to embedded eigenvalues $\pm 2\omega\jj$.
Besides eigenvalues mentioned on this plot,
there is always eigenvalue $\lambda=0$.
}
\label{fig-omegakgsimple}
\end{figure}

\begin{proof}
Part~\itref{theorem-d-lambda-1} in the case $\omega\ne 0$
follows from Lemma~\ref{lemma-sigma-a}~\itref{lemma-sigma-a-5}
and from Lemma \ref{K0} in the case $\omega=0$.
Moreover, in the case $\omega\ne 0$,
one can arrive at the same conclusion 
from considering the function $D_{\omega,\kappa}(\lambda)$.
We have:
\begin{eqnarray}\label{asd}
D_{\omega,\kappa}(\lambda)=(\alpha-2\nu_+)(\alpha-2\nu_-)(1+\kappa\varkappa Q(\jj\lambda)).
\end{eqnarray}
We notice that
$$
\alpha-2\nu_{\pm}=2(\varkappa-\sqrt{\varkappa^2+\lambda\pm2\jj\lambda\omega})=O(\lambda),\quad \lambda\to 0.
$$
Moreover, by (\ref{S-exp}),
\[
Q(\jj\lambda)=-\frac{1}{\varkappa}-\frac{\varkappa}{\omega^2(1+\frac{\lambda^2}{4\omega^2})}+O(\lambda^2)
=-\frac{1}{\varkappa}-\frac{\varkappa}{\omega^2}+O(\lambda^2)
=-\frac{m^2}{\varkappa\omega^2}+O(\lambda^2),\quad |\lambda|<2|\omega|,
\]
therefore,
\[
1+\kappa\varkappa Q(\jj\lambda)
=1-\frac{\kappa m^2}{\omega^2}+O(\lambda^2)
=\left\{\begin{array}{cc}
O(1),\quad  \omega\ne\Omega_\kappa,\\
O(\lambda^2),\quad \omega=\Omega_\kappa,
\end{array}\right.
\qquad\abs{\lambda}\le 2|\omega|.
\]
It follows that
if $\omega=\Omega_\kappa$,
then $\lambda=0$ is a root of \eqref{asd}
of order four.

Part~\itref{theorem-d-lambda-2} follows from  Lemma~\ref{K0},
and
Part~\itref{theorem-d-lambda-3} follows from Lemma~\ref{lemma-sigma-a}~\itref{lemma-sigma-a-3}.
Let us prove
Part~\itref{theorem-d-lambda-4}.   
By Part~\itref{theorem-d-lambda-1},
for $\omega=\Omega_\kappa$, $\sigma\sb{\mathrm{p}}(\bfA(\omega,\kappa))=\{0\}$ of algebraic multiplicity $4$.
For $\omega\gtrless\Omega_\kappa$, the two nonzero eigenvalues
start moving away from the origin,
becoming purely
real for $\omega<\Omega_\kappa$ and 
purely imaginary for $\omega>\Omega_\kappa$.
These two purely imaginary eigenvalues hit the threshold points $\pm\jj(m-\abs{\omega})$
at some values
$\pm\calT_\kappa$,
with $\Omega_\kappa<\calT_\kappa<m$,
becoming virtual levels (see Lemma~\ref{lemma-vl}), since
by Part~\itref{theorem-d-lambda-1},
there are no eigenvalues at this threshold.
It remains to note that  $\omega=\calT_\kappa$ corresponds to
$\kappa=K_{\omega}$. Indeed, solving (\ref{cT}),  we get
$$
\kappa=K_{\omega}=\frac12\Big(\sqrt{\frac{\omega^2}{m^2}+\frac{\omega}{m}}+\frac{\omega}{m}-1\Big)
=\frac{2\Big(\frac{\abs{\omega}}{m+\abs{\omega}}\Big)^{1/2}-1}
{2-2\Big(\frac{\abs{\omega}}{m+\abs{\omega}}\Big)^{1/2}}.
$$
The condition for $\calT_\kappa$ to be smaller than $m$
(so that the root of $D_{\omega,\kappa}(\lambda)$
indeed arrives at $\jj(m-\abs{\omega})$
when $\omega=\pm\calT_\kappa\in(-m,m)$) is
$
(1+2\kappa)^2<3+4\kappa$,
which gives the requirement
$\kappa\in(-\frac{1}{2},\frac{1}{\sqrt{2}})$.

Part~\itref{theorem-d-lambda-5} follows from  Lemma~\ref{lemma-negative}.
The absence of nonzero roots for $\kappa< K_\omega$,
$\kappa\ne 0$, is explained by the fact that
that when we passing the thresholds $\kappa=K_{\omega}$
the nonzero root $\lambda(\omega,\kappa)$ of $D_{\omega,\kappa}(\lambda)$
moves onto the unphysical sheet  of the Riemann surface of $D_{\omega,\kappa}(\lambda)$.
Recall that in the case  $\kappa=0$, $\abs{\omega}\ge m/3$
the point spectrum also contains
embedded eigenvalues $\pm 2\omega\jj$ by
Lemma~\ref{lemma-vl}~\ref{lemma-vl-1}).

This completes the proof of Theorem~\ref{theorem-d-lambda}.
\end{proof}

\appendix
\section{Appendix: Reducing $D_{\omega,\kappa}(\lambda)=0$ to a cubic with explicit solution}
\label{appendix}

For the completeness,
we provide an explicit solution to
the equation $D_{\omega,\kappa}(\lambda)=0$,
with $D_{\omega,\kappa}(\lambda)$ from \eqref{def-d-lambda},
which we can cast as a cubic equation.
Albeit explicit, this solution does not yield
the spectral properties of $\bfA(\omega,\kappa)$
as readily as the methods employed in Section~\ref{section-4};
yet this solution enables, for example, the analysis of
large $\kappa$ asymptotics
of roots of $D_{\omega,\kappa}(\lambda)$ on physical and unphysical sheets of the
Riemann surface of $D_{\omega,\kappa}(\lambda)$.
Denote $\varSigma=\nu_++\nu_-$,
with $\nu_\pm=\nu_\pm(\omega,\lambda)$ from \eqref{def-nu};
one has
$$
-2\nu_+\nu_-
=-\varSigma^2+(\nu_+^2+\nu_-^2)
=-\varSigma^2+(2m^2-2\omega^2+2\lambda^2)
=-\varSigma^2+2\lambda^2+\alpha^2/2.
$$
Then (\ref{deter-nu}) takes the form
\begin{equation}\label{eqn2}
\alpha^2(1+\kappa)^2-2\alpha(1+\kappa)\varSigma
-2(-\varSigma^2+2\lambda^2+\alpha^2/2)-\alpha^2\kappa^2=0
\end{equation}
and then
$
\varSigma^2-\alpha(1+\kappa)\varSigma+(\alpha^2\kappa-2\lambda^2)=0.
$
Equation (\ref{eqn2}) implies
\begin{equation}\label{eqn4}
\varSigma=
\fra{\big(
\alpha(1+\kappa)\pm\sqrt{\alpha^2(1-\kappa)^2+8\lambda^2}
\,\big)}{2}.
\end{equation}
Further, from (\ref{eqn2}) and (\ref{eqn4}),
\begin{eqnarray*}
-2\nu_+\nu_-
&=&-\varSigma^2+2\lambda^2+\alpha^2/2
=\alpha^2(1+\kappa)^2/2-\alpha^2\kappa^2/2
-\alpha(1+\kappa)\varSigma
\\
&=&\frac{\alpha^2(1+\kappa)^2}{2}-\frac{\alpha^2\kappa^2}{2}-\frac{\alpha(1+\kappa)}2\Big(\alpha(1+\kappa)\pm\sqrt{\alpha^2(1-\kappa)^2+8\lambda^2}\Big)
\\
&=&-\frac{\alpha^2\kappa^2}2\mp\frac{\alpha(1+\kappa)}{2}\sqrt{\alpha^2(1-\kappa)^2+8\lambda^2},
\end{eqnarray*}
so
\begin{eqnarray}\label{KK1}
\nu_+^2\nu_-^2
=\frac{\alpha^4\kappa^4}{16}+\frac{\alpha^2(1+\kappa)^2}{16}
\big(\alpha^2(1-\kappa)^2+8\lambda^2\big)
\pm\frac{\alpha^3(1+\kappa)\kappa^2}{8}\sqrt{\alpha^2(1-\kappa)^2+8\lambda^2}.
\end{eqnarray}
On the other side,  the definition of $\nu_\pm$
\eqref{def-nu}
implies that
\begin{eqnarray}\label{KK2}
\nu_+^2\nu_-^2
&=&
((\omega+\jj \lambda)^2-m^2)((\omega-\jj\lambda)^2-m^2)
=((\omega-m)^2+\lambda^2)((\omega+m)^2+\lambda^2) 
\nonumber
\\
&=&
(\omega^2-m^2)^2+\lambda^4+2\lambda^2(\omega^2+m^2)
=\frac{\alpha^4}{16}+\lambda^4+4\lambda^2m^2-\frac{\lambda^2\alpha^2}2.
\end{eqnarray}
From (\ref{KK1}) and (\ref{KK2}),
simplifying and denoting $x=\lambda^2$, we obtain:
\begin{eqnarray}\label{x-x-x}
x^2+4x m^2-x \alpha^2-\frac{\alpha^4\kappa^4}8+\frac{\alpha^4\kappa^2}8
-\frac{x \alpha^2\kappa^2}2
-x \alpha^2\kappa
=
\pm\frac{\alpha^3(1+\kappa)\kappa^2}{8}\sqrt{\alpha^2(1-\kappa)^2+8x}.
\end{eqnarray}
Defining
\begin{eqnarray}\label{def-c}
c(\omega,\kappa)=4m^2-\alpha^2-\alpha^2\kappa-\fra{\alpha^2\kappa^2}{2}
\end{eqnarray}
and squaring \eqref{x-x-x}
yields
$$
\Big(
x^2+c x+\frac{\alpha^4\kappa^2(1-\kappa^2)}{8}
\Big)^2
=\frac{\alpha^8(1-\kappa^2)^2\kappa^4}{64}+\frac{\alpha^6(1+\kappa)^2\kappa^4 x}8.
$$
There is a root $x=0$.
To find nonzero roots,
we simplify the above and cancel $x$, arriving at
\begin{eqnarray}\label{cubic-0}
x^3+2c x^2+x
\Big[c^2+\frac{\alpha^4\kappa^2(1-\kappa^2)}{4}\Big]+c\frac{\alpha^4\kappa^2(1-\kappa^2)}{4}
-\frac{\alpha^6(1+\kappa)^2\kappa^4}8=0.
\end{eqnarray}
Writing
\begin{eqnarray}\label{x-y}
x=y-\fra{2c}{3},
\end{eqnarray}
we reduce the cubic equation (\ref{cubic-0}) to the form
\begin{eqnarray}\label{cubic}
y^3+p y+q=0
\end{eqnarray}
with
\begin{eqnarray}\label{def-p-q}
p=-\frac{c^2}{3}+\frac{\alpha^4\kappa^2(1-\kappa^2)}{4},
\qquad
q=-\frac{2c^3}{27}+\frac{c\alpha^4\kappa^2(1-\kappa^2)}{12}-\frac{\alpha^6(1+\kappa)^2\kappa^4}8.
\end{eqnarray}
If the discriminant
\begin{equation}\label{def-discriminant}
\Delta(\omega,\kappa)= -4p^3-27q^2
\end{equation}
with $c=c(\omega,\kappa)$ from \eqref{def-c},
is negative,
then there is exactly one real root
$y\in\R$
of equation \eqref{cubic}
(and two complex conjugate roots with nonzero imaginary part).
The real root
is given by
$
y_1=\big(-\frac{q}{2}+\frac{(-\Delta)^{1/2}}{108}\big)^{1/3}
+
\big(-\frac{q}{2}-\frac{(-\Delta)^{1/2}}{108}\big)^{1/3},
$
and then \eqref{x-y} yields
\begin{eqnarray}\label{x1}
x_1=\Big(-\frac{q}{2}+\frac{(-\Delta)^{1/2}}{108}\Big)^{1/3}
+
\Big(-\frac{q}{2}-\frac{(-\Delta)^{1/2}}{108}\Big)^{1/3}-\frac{2c}{3}.
\end{eqnarray}
Let us mention that
roots of \eqref{cubic}
with nonzero imaginary part
correspond to roots $\lambda=\pm\sqrt{y-2c/3}$ of $D_{\omega,\kappa}(\lambda)$
with nonzero real and imaginary parts.
By
Lemma~\ref{lemma-sigma-a}~\itref{lemma-sigma-a-3},
these roots do not correspond to eigenvalues of $\bfA(\omega,\kappa)$.

We are going to show that
the discriminant $\Delta(\omega,\kappa)$
is negative
either if $\abs{\omega}<m$ is sufficiently close to $m$
or if $\abs{\kappa}$ is sufficiently large,
so that there is exactly one real solution to
\eqref{cubic}
and therefore exactly one pair of roots
to $D_{\omega,\kappa}(\lambda)=0$
which are located on the Riemann sheet
corresponding to eigenvalues of $\bfA(\omega,\kappa)$
(that is, when the real parts of
$\nu_{+}$ and $\nu_{-}$ from \eqref{def-nu} are positive).

\begin{lemma}\label{lemma-delta-negative}
\begin{enumerate}
\item
There is $\kappa_0>0$ such that $\Delta(\omega,\kappa)<0$
for $\abs{\kappa}>\kappa_0$.
\item
For each $\kappa\in\R\setminus\{-1,0\}$,
there is $\omega_\kappa\in(0,m)$ such that
$\Delta(\omega,\kappa)<0$ for $\abs{\omega}\in(\omega_\kappa,m)$.
\end{enumerate}
\end{lemma}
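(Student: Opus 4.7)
The plan is to establish both parts by a direct asymptotic expansion of $\Delta(\omega,\kappa) = -4p^3 - 27q^2$ with $p, q$ from \eqref{def-p-q}. In each regime the leading behaviors of $-4p^3$ and $-27q^2$ will cancel exactly, so the main obstacle is to push the expansion far enough to extract the first surviving coefficient and verify its sign.

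For part (1), I would fix $\omega\in(-m,m)\setminus\{0\}$ so that $\alpha=2\sqrt{m^2-\omega^2}$ is a positive constant and the ratio $\gamma:=8m^2/\alpha^2 = 2m^2/(m^2-\omega^2)$ is strictly greater than $2$. Setting $\epsilon=1/\kappa$, I would write
\[
c = -\tfrac{\alpha^2\kappa^2}{2}\bigl(1 + 2\epsilon + (2-\gamma)\epsilon^2\bigr)
\]
and compute $p, q$ as Laurent series in $\epsilon$ with leading behaviors $-\alpha^4\kappa^4/3$ and $-2\alpha^6\kappa^6/27$ respectively. The top-order contributions $-4p^3\sim(4\alpha^{12}/27)\kappa^{12}$ and $-27q^2\sim -(4\alpha^{12}/27)\kappa^{12}$ cancel, and so do the $\kappa^{11}$ corrections. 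Carrying the expansions two orders further and subtracting yields
\[
\Delta(\omega,\kappa) = \tfrac{\alpha^{12}(2-\gamma)}{2}\kappa^{10} + O(\kappa^9).
\]
Since $2-\gamma<0$ whenever $\omega\ne0$, the coefficient is strictly negative and hence $\Delta(\omega,\kappa)<0$ for all $\kappa$ with $|\kappa|$ exceeding a threshold $\kappa_0=\kappa_0(\omega)$.

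For part (2), I would fix $\kappa\in\R\setminus\{-1,0\}$ and let $\beta:=\alpha^2 = 4(m^2-\omega^2)\to 0$. Writing $c = 4m^2 - \beta B$ with $B:=1+\kappa+\kappa^2/2 = \tfrac{1}{2}((1+\kappa)^2+1) > 0$, I would expand $p$ and $q$ as polynomials in $\beta$, then compute $-4p^3$ and $-27q^2$ through order $\beta^3$. The contributions at orders $\beta^0$, $\beta^1$ and $\beta^2$ cancel identically between the two pieces. In the remainder at order $\beta^3$ the terms proportional to $B^3$ and to $B\kappa^2(1-\kappa^2)$ also cancel, leaving only the isolated contribution coming from $-\alpha^6(1+\kappa)^2\kappa^4/8$ inside $q$, and one arrives at
\[
\Delta(\omega,\kappa) = -32\,m^6(1+\kappa)^2\kappa^4\,\beta^3 + O(\beta^4) = -2048\,m^6(1+\kappa)^2\kappa^4(m^2-\omega^2)^3 + O((m^2-\omega^2)^4).
\]
When $\kappa\notin\{-1,0\}$ the prefactor is strictly negative, so $\Delta(\omega,\kappa)<0$ for $|\omega|$ sufficiently close to $m$; this determines the required $\omega_\kappa\in(0,m)$.

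The main difficulty in both parts is the repeated cancellation of leading orders, which forces one to track the expansions several orders past the naive leading term before getting a nonzero contribution. The arithmetic is most delicate in part (2), where both $B^3$-contributions and $B\kappa^2(1-\kappa^2)$-contributions must be checked to cancel between $-4p^3$ and $-27q^2$; the surviving prefactor $-32m^6(1+\kappa)^2\kappa^4$ then displays precisely the two exceptional values $\kappa=-1$ and $\kappa=0$ excluded from the statement, which serves as a useful consistency check. In part (1), the fact that the coefficient $2-\gamma$ vanishes exactly at $\omega=0$ is also consistent with a direct check that $\Delta(0,\kappa)\equiv 0$, reflecting the multiple root of the cubic at $\omega=0$ known from Lemma~\ref{omega0}.
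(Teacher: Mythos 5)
Your proposal is correct, and for part (2) it is essentially the paper's own argument: writing $\Delta=c^2P^2-4P^3-4c^3Q+18cPQ-27Q^2$ with $P=\alpha^4\kappa^2(1-\kappa^2)/4$ and $Q=\alpha^6(1+\kappa)^2\kappa^4/8$, the dominant term as $\alpha\to0$ is $-4c^3Q$, which gives exactly your $-32\,m^6(1+\kappa)^2\kappa^4\,\beta^3+O(\beta^4)$; the paper reaches the same conclusion by factoring out $-4Q=-\tfrac{1}{2}\alpha^6\kappa^4(1+\kappa)^2$ and noting that the remaining bracket tends to $c^3\to 64m^6>0$. For part (1), however, your computation diverges from the paper's, and you are right where the paper is wrong. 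The paper claims $\Delta\sim-\alpha^{12}\kappa^{12}/8$ as $\abs{\kappa}\to\infty$, but two of the five leading terms in its displayed evaluation carry the wrong sign; with correct signs the order-$\kappa^{12}$ contributions to the bracket sum to $(-4-1-18-4+27)\alpha^6\kappa^6/32=0$, and the order-$\kappa^{11}$ terms cancel as well, exactly as you assert. The first surviving term is indeed at order $\kappa^{10}$, with coefficient $\alpha^{10}(\alpha^2-4m^2)=-4\omega^2\alpha^{10}$, which coincides with your $\alpha^{12}(2-\gamma)/2$. This makes your closing remark about $\omega=0$ essential rather than cosmetic: at $\omega=0$ one finds $p=-\tfrac{4}{3}m^4\kappa^2(1+2\kappa)^2$ and $q=-\tfrac{16}{27}m^6\kappa^3(1+2\kappa)^3$, hence $\Delta(0,\kappa)\equiv0$, so part (1) of the lemma is false at $\omega=0$ and must be read with $\omega\ne0$ fixed, with $\kappa_0$ depending on $\omega$ and degenerating as $\omega\to0$. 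In short, your plan (once the expansions are written out, which your stated coefficients show you have effectively done) is a correct proof of the lemma as it should be stated, and it quietly repairs the paper's proof of part (1).
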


\begin{proof}
Using \eqref{def-p-q},
one computes:
\begin{eqnarray*}
&&
\hskip -20pt
\Delta(\omega,\kappa)=
-4p^3-27q^2
\\
&&
\hskip -20pt
=
-\frac{\alpha^6\kappa^4(1+\kappa)^2}{2}
\left(
c^3
-\frac{c^2\alpha^2(1-\kappa)^2}{8}
-\frac{9c\alpha^4\kappa^2(1-\kappa^2)}{8}
+\frac{\alpha^6\kappa^2(1-\kappa^2)(1-\kappa)^2}{8}
+\frac{27 \alpha^6(1+\kappa)^2\kappa^4}{32}
\right).
\end{eqnarray*}
Since $c=4m^2-\alpha^2(1+\kappa+\kappa^2/2)$,
one concludes that,
for each fixed $\kappa\in\R$ different from $-1$ and $0$,
$\Delta(\omega,\kappa)<0$
if $\alpha=2\sqrt{m^2-\omega^2}$ is sufficiently small
(that is, if $\abs{\omega}$ is sufficiently close to $m$).

Alternatively,
one can keep the highest order powers of $\kappa$,
substituting $c\sim-\alpha^2\kappa^2/2$ and getting
\[
\Delta(\omega,\kappa)
\sim
-\alpha^6\kappa^6
\Big(
-\frac{1}{2}
\frac{\alpha^6\kappa^6}{8}
-
\frac{\alpha^4\kappa^4}{4}
\frac{\alpha^2\kappa^2}{16}
+
\frac{\alpha^2\kappa^2}{2}
\frac{9\alpha^4\kappa^4}{16}
+
\frac{\alpha^6\kappa^6}{16}
+
\frac{27\alpha^6\kappa^6}{64}
\Big)
=
-\frac{\alpha^{12}\kappa^{12}}{8},
\]
showing that
$\Delta(\omega,\kappa)$
is strictly negative for $\abs\kappa$ large enough.
\end{proof}
\begin{remark}
Let us note that
for $\abs{\kappa}\to\infty$,
one has:
\[
c\sim-\frac{\alpha^2\kappa^2}{2},
\qquad
(-\Delta)^{1/2}\sim\frac{\alpha^6\kappa^6}{2\sqrt{2}},
\qquad
q
\sim
-\frac{2}{27}c^3-\frac{\alpha^6\kappa^6}{8}
\sim
-\frac{29}{27}\frac{\alpha^6\kappa^6}{8},
\] 
and \eqref{x1} yields
\[
x_1\sim
\Big(
\frac{\alpha^6\kappa^6}{8}
+
\frac{1}{108}\frac{\alpha^6\kappa^6}{2\sqrt{2}}
\Big)^{1/3}
+
\Big(
\frac{\alpha^6\kappa^6}{8}
-
\frac{1}{108}\frac{\alpha^6\kappa^6}{2\sqrt{2}}
\Big)^{1/3}
+
\frac{2}{3}\frac{\alpha^2\kappa^2}{2}
\sim
c\alpha^2\kappa^2,
\]
with some $c>1$,
showing that
$\lambda_{1,2}=\pm\sqrt{x_1}$ are real. For  $\kappa>0$
these values $\lambda_{1,2}$ are exactly
the two real nonzero eigenvalues of $\bfA(\omega,\kappa)$
which appear in Theorem~\ref{theorem-d-lambda}~\itref{theorem-d-lambda-3}.
For $\kappa<0$, since $\bfA(\omega,\kappa)$ has no
real eigenvalues
(Lemma~\ref{lemma-sigma-a}~\itref{lemma-sigma-a-3}),
$\lambda_{1,2}$ correspond
to resonances of $\bfA(\omega,\kappa)$
(that is, to zeros of \eqref{def-d-lambda}
on one of the unphysical Riemann sheets
of $D_{\omega,\kappa}(\lambda)$).
\end{remark}

The expression \eqref{x1}
can be used
to analyze zero eigenvalues of $\bfA(\omega,\kappa)$.

\begin{lemma}
$x_1=0$ in the following cases:
$\kappa=-1$, $\kappa=0$, and $\kappa=\omega^2/m^2$.
\end{lemma}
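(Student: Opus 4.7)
The plan is to observe that $x_1=0$ is equivalent to $x=0$ being the root of the cubic \eqref{cubic-0} selected by the Cardano formula \eqref{x1}, and then to identify the vanishing locus of the constant term of \eqref{cubic-0} by a direct factorization.

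First, I would substitute $x=0$ into \eqref{cubic-0}; only the constant term survives, and I would rewrite it as
$$
c\,\frac{\alpha^4\kappa^2(1-\kappa^2)}{4}-\frac{\alpha^6(1+\kappa)^2\kappa^4}{8}
=\frac{\alpha^4\kappa^2(1+\kappa)}{8}\bigl(2c(1-\kappa)-\alpha^2(1+\kappa)\kappa^2\bigr).
$$
Since $\alpha=2\sqrt{m^2-\omega^2}>0$, this vanishes iff $\kappa\in\{-1,0\}$ or $2c(1-\kappa)=\alpha^2(1+\kappa)\kappa^2$. For the last equation I would plug in $c=4m^2-\alpha^2(1+\kappa+\kappa^2/2)$ from \eqref{def-c} and expand using $(2+2\kappa+\kappa^2)(1-\kappa)=2-\kappa^2-\kappa^3$; the $\kappa^2$ and $\kappa^3$ terms cancel, leaving $8m^2(1-\kappa)=2\alpha^2$, which combined with $\alpha^2=4(m^2-\omega^2)$ gives precisely $\kappa=\omega^2/m^2$.

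To finish, one must check that the real branch in \eqref{x1} actually returns $0$ (as opposed to some other real root) in each case. For $\kappa\in\{-1,0\}$ both $\alpha^4\kappa^2(1-\kappa^2)/4$ and $\alpha^6(1+\kappa)^2\kappa^4/8$ vanish, so \eqref{cubic-0} degenerates to $x(x+c)^2=0$; then $p=-c^2/3$, $q=-2c^3/27$, $\Delta=-4p^3-27q^2=0$, and \eqref{x1} yields $x_1=2(c^3/27)^{1/3}-2c/3=0$ with the real cube root. For $\kappa=\omega^2/m^2$ with $\omega\ne 0$ (so $\kappa\in(0,1)$), the cubic factors as $x\bigl(x^2+2cx+c^2+\alpha^4\kappa^2(1-\kappa^2)/4\bigr)$, whose quadratic factor has discriminant $-\alpha^4\kappa^2(1-\kappa^2)<0$; hence $x=0$ is the unique real root of \eqref{cubic-0}, forcing $\Delta<0$ and $x_1=0$ by formula \eqref{x1}. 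The case $\omega=0$, $\kappa=0$ is already covered by the previous paragraph.

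The whole argument is elementary algebra; the only mildly delicate point is choosing the common factor $\alpha^4\kappa^2(1+\kappa)/8$ so that the remaining bracket collapses to the Kolokolov condition $\kappa=\omega^2/m^2$. No genuine obstacle arises, although one must be careful with the real branch of the cube root in the degenerate cases $\kappa\in\{-1,0\}$ where the discriminant $\Delta$ vanishes.
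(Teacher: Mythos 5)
Your proof is correct and follows essentially the same route as the paper's: both reduce the question to the vanishing of the constant term of the cubic \eqref{cubic-0} (the paper phrases this as requiring $y_1=2c/3$ to solve the depressed cubic \eqref{cubic}) and then factor out $\alpha^4\kappa^2(1+\kappa)$ so that the remaining bracket collapses to $\kappa=\omega^2/m^2$. Your additional verification that the branch selected by formula \eqref{x1} actually returns the root $0$ in each case (via $\Delta=0$ for $\kappa\in\{-1,0\}$ and via the negative discriminant of the quadratic factor for $\kappa=\omega^2/m^2$) is a welcome bit of extra care that the paper's proof leaves implicit.
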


\begin{proof}
Since $x_1=0$ corresponds to $y_1=2c/3$ (see \eqref{x-y}),
we need to solve
\[
(2c/3)^3+(2c/3)p+q=0.
\]
Substituting  $c$ from \eqref{def-c},
$p$, $q$ from \eqref{def-p-q}, and simplifying,
one arrives at
\[
\Big(4m^2-(1+\kappa+\kappa^2/2)\alpha^2\Big)
\frac{\alpha^4 \kappa^2(1-\kappa^2)}{4}
-\frac{\alpha^6(1+\kappa)^2\kappa^4}{8}=0.
\]
Canceling common factors
(corresponding to solutions $\kappa=-1$ and $\kappa=0$),
one obtains $\omega^2=\kappa m^2$.
\end{proof}

\begin{remark}
The value $\kappa=\omega^2/m^2$
is in agreement with the value of $\Omega_\kappa$ from
\eqref{def-o-k}.
By Lemma~\ref{lemma-sigma-a},
we know that only the cases
$\kappa=0$ and $\kappa=\omega^2/m^2$
correspond to higher algebraic multiplicity of eigenvalue
$\lambda=0$,
while the case $\kappa=-1$ has to correspond to
resonances:
one can see from \eqref{def-d-lambda}
that in this case
the values $\lambda=0$ are located on the Riemann sheet
of $D_{\omega,\kappa}(\lambda)$
characterized by $\nu\sb\pm<0$.
\end{remark}

\def\cprime{\mbox{'}} \def\polhk#1{\setbox0=\hbox{#1}{\ooalign{\hidewidth
  \lower1.5ex\hbox{`}\hidewidth\crcr\unhbox0}}}

\end{document}